\newtheorem{theorem}{Theorem}[section]
\newtheorem{lemma}[theorem]{Lemma}
\newtheorem{proposition}[theorem]{Proposition}
\newtheorem{corollary}[theorem]{Corollary}
\theoremstyle{definition}
\newtheorem{definition}[theorem]{Definition}
\newtheorem{remark}[theorem]{Remark}
\begin{document}

\title
{$\mathbb{Z}_{3}$-graded identities of the pair $(M_{3}(K),gl_{3}(K))$}
\author{ Lu\'is Felipe Gon\c{c}alves Fonseca}

\address{Lu\'is Felipe Gon\c{c}alves Fonseca: Instituto de Ci\^encias Exatas e Tecnl\'ogicas \\
Universidade Federal de Vi\c{c}osa \\
Florestal, MG, Brazil. ZIP Code: 35690-000.}
\email{luisfelipe@ufv.br}

\date{}

\maketitle

\begin{abstract}
Let $M_{n}(K)$ be the algebra of $n \times n$ matrix over an infinite integral domain $K$. Let $gl_{n}(K)$ be the Lie algebra of $n \times n$ matrix with the usual Lie product over $K$. Let $G = \{g_{1},\ldots,g_{n}\}$ be a group of order $n$. We describe the polynomials that form a basis for the $G$-graded identities of the pair $(M_{n}(K),gl_{n}(K))$ with an elementary $G$-grading induced by the $n$-tuple $(g_{1},\ldots,g_{n})$. In the end, we describe an explicit basis for the $\mathbb{Z}_{3}$-graded identities of the pair $(M_{3}(K),gl_{3}(K))$.
\end{abstract}

\section{Introduction}

In 1973, Razmyslov (\cite{Razmyslov}) introduced the weak polynomial identities. In this paper, the author described a basis for the associative matrix algebra of order $2$ and the Lie algebra of matrix algebra of order $2$ with zero trace.
Formanek (\cite{Formanek}) and Razmyslov (\cite{Razmyslov2}) solved a Kaplanky's problem about the existence of multihomogeneous central polynomial of matrix algebra of order $n \geq 3$. Razmyslov used notions of weak polynomial identities to get a central polynomial.
Since Razmyslov, the weak identities have been studied by many mathematicians (\cite{Drensky-Rashkova},\cite{Di Vincenzo2}, \cite{Koshlukov}, \cite{Koshlukov-Krasilnikov}).

An important breakthrough occurred in the late 1980s in PI-Theory. Kemer (\cite{Kemer}) solved a proposed question of Specht about T-ideals over the fields of characteristic zero \cite{Specht}. To solve Specht's problem, the $\mathbb{Z}_{2}$-graded identities was an important tool.

In 1992, Di Vincenzo (\cite{Di Vincenzo}) described the $\mathbb{Z}_{2}$-graded identities of $2 \times 2$ matrix algebra over a field of characteristic zero. After this pioneer work, many authors contributed to the study of $n \times n$ matrix algebra over an infinite field. For fields of characteristic zero, we can cite Vasilovsky (\cite{Vasilovsky1}, \cite{Vasilovsky2}), Bahturin and Drensky \cite{Bahturin-Drensky}. For arbitrary infinite fields, we can refer Azevedo (\cite{Azevedo1}, \cite{Azevedo2}) and Silva (\cite{Silva}).

Let $G = \{g_{1},\ldots,g_{n}\}$ be a group of order $n$ and let $M_{n}(K)$ be the algebra of $n \times n$ matrix over an infinite integral domain $K$. Let $gl_{n}(K)$ be the Lie algebra of $n \times n$ matrix with the usual Lie product. In this paper, we combine the methods and ideas of \cite{Fonseca-Mello} (Section \ref{Generic matrices}), \cite{Silva} (Sections \ref{Generic matrices} and \ref{Preliminary Results}).

We describe the polynomials that form a basis for the $G$-graded identities of the pair $(M_{n}(K),gl_{n}(K))$ with an elementary $G$-grading induced by the $n$-tuple $(g_{1},\ldots,g_{n})$. Furthermore, we describe an explicit basis for the $\mathbb{Z}_{3}$-graded identities of the pair $(M_{3}(K),gl_{3}(K))$.

\section{Preliminaries}

Let $K$ be an infinite integral domain. Let $\mathbb{N}$ be the set of positive integers. Let $n \in \mathbb{N}$. We denote the set of the first $n$ positive integers by $\widehat{n}$ .

Let $A$ be an associative algebra over $R$ and let $A^{(-)}$ be the Lie algebra associated with $A$. The Lie multiplication of $A^{(-)}$ is given by $[a,b] = ab - ba$ for all $a,b \in A$. We also write $[a,b,c] = [[a,b],c]$ for all $a,b,c \in A$.

Let $G = \{g_{1},\ldots,g_{n}\}$ be a group of order $n$ and let $1_{G}$ be the identity of $G$.

An algebra $A$ is $G$-graded if $A$ can be written as direct sum of left $R$-submodules $A = \bigoplus_{g \in G} A_{g}$ such that $A_{g}A_{h} \subset A_{gh}$ for all $g,h \in G$. The elements of $A_{g}$ are said to be homogeneous of $G$-degree $g$. The component $A_{1_{G}}$ is said to be neutral. Let $a \in A$. We say that $\alpha(a) = g$ if $a \in A_{g}$. If $B$ is a Lie subalgebra of $A^{(-)}$ such that $B = \bigoplus_{g \in G} B_{g}, B_{g} = B\cap A_{g} (g \in G)$, we say that $(A,B)$ is a graded pair.

Let $M_{n}(K)$ be the algebra of $n\times n$ matrices over $K$, and let $gl_{n}(K) = (M_{n}(K))^{(-)}$. We denote the matrix unit in which only non-zero entry is $1$ in the $i$-th row and $j$-th column by $e_{ij} \in M_{n}(K)$. Let $\overline{g} = (h_{1},\ldots,h_{n}) \in G^{n}$. We say that $M_{n}(K)$ is equipped with the elementary grading induced by $\overline{g}$ when each $e_{ij}$ is homogeneous and $\alpha(e_{ij}) = h_{i}^{-1}h_{j}$. The following proposition is a well-known result about graded matrix algebra over fields.

\begin{proposition}(\cite{Dascalescu})
Let $F$ be a field. Let $G$ be a group. The $G$-grading of $M_{n}(F)$ is elementary if and only if all matrix units $e_{ij}$ are homogeneous.
\end{proposition}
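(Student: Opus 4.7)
The forward implication is essentially definitional: if the grading on $M_n(F)$ is induced by a tuple $(h_1,\ldots,h_n)$, then by construction each $e_{ij}$ is declared homogeneous of degree $h_i^{-1}h_j$, so I would dispose of this direction in one line.

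For the converse — the interesting direction — I plan to reconstruct an inducing tuple from the homogeneous degrees of the matrix units. Write $g_{ij} := \alpha(e_{ij}) \in G$. The plan has three short steps. First I would pin down the diagonal: since $e_{ii}$ is a non-zero idempotent, the relation $e_{ii} = e_{ii}^{2}$ together with $e_{ii}^{2} \in A_{g_{ii}^{2}}$ forces $g_{ii} = g_{ii}^{2}$ in $G$, hence $g_{ii} = 1_{G}$ for every $i$. Second, I would exploit the matrix-unit multiplication rule: whenever $e_{ij}e_{jk} = e_{ik} \neq 0$, homogeneity yields $g_{ij}g_{jk} = g_{ik}$. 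Specializing $k = i$ gives $g_{ij}g_{ji} = 1_{G}$, and specializing the middle index to $1$ gives $g_{i1}g_{1j} = g_{ij}$.

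With these relations in hand, I would define $h_{i} := g_{1i}$ for each $i$ (so $h_{1} = g_{11} = 1_{G}$). The computation $h_{i}^{-1}h_{j} = g_{1i}^{-1}g_{1j} = g_{i1}g_{1j} = g_{ij}$ then shows that each $e_{ij}$ sits in the degree prescribed by the $n$-tuple $(h_{1},\ldots,h_{n})$; since the matrix units form an $F$-basis of $M_{n}(F)$, this matches the ambient grading on all of $M_{n}(F)$ with the elementary grading induced by this tuple.

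The only point that requires genuine argument rather than index bookkeeping is the diagonal step, which is where the group-theoretic content (cancellation, and the fact that the only idempotent element of a group is its identity) enters; everything else is a formal consequence of the fact that $\{e_{ij}\}$ is a multiplicative basis for $M_{n}(F)$. I do not expect the hypothesis that $F$ is a field to play any real role beyond ensuring that the $e_{ij}$ remain an $F$-linear basis.
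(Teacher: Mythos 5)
Your argument is correct. Note that the paper itself offers no proof of this proposition: it is stated with a citation to D\u{a}sc\u{a}lescu, Ion, N\u{a}st\u{a}sescu and Rios Montes, so there is no internal proof to compare against. Your reconstruction is the standard argument from that reference (where gradings with all $e_{ij}$ homogeneous are called \emph{good} gradings): the idempotent relation $e_{ii}=e_{ii}^{2}$ forces $\alpha(e_{ii})=1_{G}$ because a non-zero element lies in exactly one homogeneous component of a direct sum; the multiplicative relations $e_{ij}e_{jk}=e_{ik}$ give $\alpha(e_{ij})\alpha(e_{jk})=\alpha(e_{ik})$; and setting $h_{i}=\alpha(e_{1i})$ yields $h_{i}^{-1}h_{j}=\alpha(e_{i1})\alpha(e_{1j})=\alpha(e_{ij})$, which is precisely the paper's definition of the elementary grading induced by $(h_{1},\ldots,h_{n})$. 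The final identification of the two gradings is also fine: each span of matrix units of a fixed degree sits inside the corresponding homogeneous component, and since both families decompose $M_{n}(F)$ as a direct sum, the inclusions are equalities. Your closing remark is also accurate: nothing beyond the $e_{ij}$ forming a multiplicative basis is used, which is consistent with the paper later applying the elementary grading over an infinite integral domain rather than a field.
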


An endomorphism $\phi: A \rightarrow A$ is called graded when $\phi(A_{g}) \subset A_{g}$ for all $g \in G$. A subalgebra $B \subset A$ is said to be graded when $B = \bigoplus_{g \in G} B_{g}$, where $B_{g} = A \cap A_{g}$ for all $g \in G$.

Let $\{X_{g}| g \in G\}$ be a family of disjoint infinite countable sets indexed by $G$. Let $X = \bigcup_{g \in G}\{X_{g}\}$. We denote the free associative algebra freely generated by $X$ by $K\langle X \rangle$. Purely for convenience, we denote the elements of $X$ by the letters $x,x_{1},\ldots,x_{i},\ldots$. We denote the subalgebra of $K\langle X \rangle^{-}$ generated by $X$ by $L$.

Let $m = x_{i_{1}}\ldots x_{i_{l}} \in K\langle X \rangle$ be a monomial. We define $\alpha(m) = \alpha(x_{i_{1}}).\cdots.\alpha(x_{i_{l}})$ and $\alpha(1) = 1_{G}$, where $1$ is the unity of $K\langle X \rangle$. The algebra $K\langle X \rangle = \bigoplus_{g \in G} (K\langle X \rangle)_{g}$ is a graded algebra, where
\begin{center}
$(K\langle X \rangle)_{g} = span_{K}\{x_{i_{1}}\ldots x_{i_{l}}| \alpha(x_{i_{1}}).\cdots.\alpha(x_{i_{l}}) = g \}$.
\end{center}

A polynomial $f(x_{1},\ldots,x_{m}) \in K\langle X \rangle$ is called a graded polynomial identity of the pair $(A,B)$ if $f(b_{1},\ldots,b_{m}) = 0$ for all $b_{1} \in B_{\alpha(x_{1})},\ldots,b_{m} \in B_{\alpha(x_{m})}$. Analogously, we can define a graded polynomial identity of $A$. We denote the set of all graded polynomial identities of the algebra $A$ and the pair $(A,B)$ by $T_{G}(A)$ and $T_{G}(A,B)$, respectively. Notice that $T_{G}(A) = T_{G}(A,B)$ when $B = A^{(-)}$.

An ideal $I \subset K\langle X \rangle$ is said to be a $T_{G}$-ideal if $\phi(I) \subset I$ for all graded endomorphisms of $K\langle X \rangle$. An ideal $I \subset K\langle X\rangle$ is called a weak $T_{G}$-ideal if $\phi(I) \subset I$ for all graded endomorphism $\phi$ of $K\langle X \rangle$ such that
\begin{center}
$\phi(x) \subset L \cap K\langle X \rangle_{\alpha(x)}$ for all $x \in X$.
\end{center}

Let $S \subset K\langle X \rangle$ be a non-empty set of polynomials. We define $\langle S \rangle$ to be the intersection of all $T_{G}$-ideals that contain $S$. In the same way, we define $\langle S \rangle_{w}$ to be the intersection of all weak $T_{G}$-ideals that contain $S$.

A set $S \subset K\langle X \rangle$ is a basis for the graded identities of $A$ if $T_{G}(A) = \langle S \rangle$. Similarly, $S$ is a basis for the graded identities of the pair $(A,B)$ if $T_{G}(A,B) = \langle S \rangle_{w}$. In this situation, we say that the graded identities of the pair $(A,B)$ follow from $S$. Note that if $S$ is a basis for $T_{G}(A,A^{(-)})$, then $S$ is a basis for $T_{G}(A)$ too.

\section{Generic matrices}\label{Generic matrices}

Let $Dom(g) = \{i \in \widehat{n} |g_{i}g \in G\}$ and $Im(g) = \{j \in \widehat{n} | g_{j}g^{-1} \in G \}$. It is clear that $Dom(g) = Im(g) = \widehat{n}$. Let $\phi_{g}: \widehat{n} \rightarrow \widehat{n}$ be the function defined by the following rule: $\phi_{g}(i) = j$ if $g_{j}=g_{i}g$. As $Dom(g) = Im(g) = \widehat{n}$, we have $\phi_{g}$ is a bijection for all $g \in G$.

\begin{remark}
Let $g_{j},g_{k} \in G$. Note that if $\phi_{g_{j}}(i) = \phi_{g_{k}}(i)$ for all $i \in \widehat{n}$, then $g_{j} = g_{k}$. Furthermore, it is clear
that $\phi_{g_{j}}(\phi_{g_{k}}(i)) = \phi_{g_{j}g_{k}}(i)$ for all $i \in \widehat{n}$.
	
\end{remark}

%%%% 8 de julho
%%%%
%%%%

Let $\Omega = \{y_{i,\phi_{g}(i)}^{k}| g \in G, i \in \widehat{n}, k \in \mathbb{N}\}$. We denote the set of commuting polynomials in $\Omega$ by $K[\Omega]$. Let $M_{n}(\Omega)$ be the algebra of $n \times n$ matrix over $K[\Omega]$.

\begin{definition}\label{generic}
The generic matrix algebra is the subalgebra of $M_{n}(\Omega)$ generated by the following elements:
\begin{center}
$A_{k,g} = \sum_{i=1}^{n} y_{i,\phi_{g}(i)}^{k} e_{i \phi_{g}(i)}, g \in G, k = 1,2,3,\ldots$.
\end{center}
We denote this subalgebra by $Gen$.

We say that $y_{i,\phi_{g}(i)}^{k}$ is the commuting variable associated with the matrix unit $e_{i \phi_{g}(i)}$.
\end{definition}

Just as $M_{n}(K)$, we can equip $M_{n}(\Omega)$ and $Gen$ with an elementary grading induced by $\overline{g}$.

\begin{proposition}\label{matrizesgenericas}
The pairs $(M_{n}(K),gl_{n}(K))$ and $(Gen, Gen^{(-)})$ have the same graded identities.
\end{proposition}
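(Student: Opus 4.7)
The plan is to establish $T_{G}(M_{n}(K), gl_{n}(K)) = T_{G}(Gen, Gen^{(-)})$ by double inclusion. Since $gl_{n}(K)$ and $Gen^{(-)}$ coincide setwise with $M_{n}(K)$ and $Gen$ respectively, a weak identity of either pair is just a polynomial in $K\langle X \rangle$ that vanishes on every homogeneous substitution from the underlying associative algebra, so the proof reduces to comparing the ordinary graded identities of $M_{n}(K)$ and $Gen$.

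For the inclusion $T_{G}(M_{n}(K), gl_{n}(K)) \subseteq T_{G}(Gen, Gen^{(-)})$, I would fix $f \in T_{G}(M_{n}(K), gl_{n}(K))$ and homogeneous $a_{1},\ldots,a_{m} \in Gen$ with $a_{i}$ of degree $g_{i}$, and argue that $f(a_{1},\ldots,a_{m}) = 0$. Since $Gen \subseteq M_{n}(K[\Omega])$, this evaluation is an $n\times n$ matrix whose entries lie in $K[\Omega]$, i.e. are polynomials in finitely many $y$-variables. Any specialization $\sigma$ sending those variables to elements of $K$ induces a ring map $M_{n}(K[\Omega]) \to M_{n}(K)$ that commutes with the evaluation of $f$, and $\sigma(a_{i})$ retains the matrix-unit support of $a_{i}$, which lies in $\{e_{j,\phi_{g_{i}}(j)} : j \in \widehat{n}\}$; hence $\sigma(a_{i}) \in M_{n}(K)_{g_{i}}$. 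The hypothesis gives $0 = f(\sigma(a_{1}),\ldots,\sigma(a_{m})) = \sigma(f(a_{1},\ldots,a_{m}))$, so each entry of $f(a_{1},\ldots,a_{m})$ is a polynomial in $K[\Omega]$ vanishing on all $K$-valued specializations. Because $K$ is an infinite integral domain, each such polynomial is zero, and therefore $f(a_{1},\ldots,a_{m}) = 0$ in $Gen$.

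For the reverse inclusion I would take $f \in T_{G}(Gen, Gen^{(-)})$ and arbitrary homogeneous matrices $b_{i} \in M_{n}(K)_{g_{i}}$. Each generic matrix $A_{i,g_{i}}$ is homogeneous of degree $g_{i}$ in $Gen$, so by hypothesis $f(A_{1,g_{1}},\ldots,A_{m,g_{m}}) = 0$. Writing $b_{i} = \sum_{j} \lambda_{i,j}\, e_{j,\phi_{g_{i}}(j)}$ with $\lambda_{i,j} \in K$ and specializing $y_{j,\phi_{g_{i}}(j)}^{i} \mapsto \lambda_{i,j}$ sends each $A_{i,g_{i}}$ to $b_{i}$; since polynomial evaluation commutes with this specialization, one obtains $f(b_{1},\ldots,b_{m}) = 0$, whence $f \in T_{G}(M_{n}(K), gl_{n}(K))$.

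I do not foresee a real obstacle here: the argument is the standard generic-matrix specialization adapted to the graded setting, and passing to the weak-identity language is free because the Lie components of both pairs coincide setwise with the underlying associative algebras. The one point requiring attention is that $K$-valued specializations of the $y$-variables preserve the $G$-grading, which is immediate from the elementary grading on $M_{n}(K[\Omega])$ together with the description of the support of $A_{i,g_{i}}$.
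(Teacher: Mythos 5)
Your argument is correct and is exactly the standard generic-matrix specialization argument that the paper invokes by citing Drensky--Formanek, Proposition 1.3.2 (the paper gives no details, only the reference); your writeup supplies those details faithfully, including the two points that need care in the graded setting, namely that $K$-specializations of the $y$-variables preserve the elementary grading and that a polynomial over an infinite integral domain vanishing on all of $K^{N}$ is zero. Nothing further is needed.
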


\begin{proof}
See for instance (\cite{Drensky-Formanek}, Proposition 1.3.2, page 13). The proof is analogous.
\end{proof}

\begin{remark}\label{operacoes}
Let $A_{k,h_{1}} = \sum_{i=1}^{n} y_{i,\phi_{h_{1}}(i)}^{k} e_{i \phi_{h_{1}}(i)}$ and $A_{l,h_{2}} = \sum_{i=1}^{n} y_{i,\phi_{h_{2}}(i)}^{l} e_{i \phi_{h_{2}}(i)}$ be two generic matrices. Recall that $e_{i \phi_{h_{1}}(i)}e_{j \phi_{h_{2}}(j)} \neq 0$ if and only if $j = \phi_{h_{1}}(i)$. If the product is non zero, then the product result is $e_{i (\phi_{h_{2}h_{1}}(i))}.$ At the light of the bijection of $\phi_{g}$ for all $g \in G$, we can conclude that
\begin{center}
$A_{k,h_{1}}A_{l,h_{2}} = \sum_{i=1}^{n} y_{i,\phi_{h_{1}}(i)}^{k} y_{\phi_{h_{1}}(i),(\phi_{h_{2}h_{1}}(i))}^{l} e_{i (\phi_{h_{2}h_{1}}(i))} $.
\end{center}
\end{remark}

\begin{definition}
Let $\overline{h} = (h_{1},\ldots,h_{q}) \in G^{q}$. For each $t \in \{0,\ldots,q-1\}$, let us define $\beta_{t}(\overline{h}) = \phi_{h_{1+t}.\cdots.h_{1}}$.
\end{definition}

For simplify, in the next proposition, we denote $(\beta_{t}(\overline{h}))(i)$ by $\beta_{t}(\overline{h})(i)$.

\begin{proposition}\label{monomio}
Let $m(x_{1},\ldots,x_{k}) = x_{i_{1}}\ldots x_{i_{q}}$ be a monomial such that
\begin{center}
$\overline{h} = (h_{i_{1}},\ldots,h_{i_{q}}) = (\alpha(x_{i_{1}}),\ldots,\alpha(x_{i_{q}}))$.
\end{center}
Then $m(A_{1,h_{1}},\ldots,A_{k,h_{k}})$ is equal to
\begin{center}
$\sum_{j = 1}^{n} y_{j,\beta_{0}(\overline{h})(j)}^{i_{1}}y_{\beta_{0}(\overline{h})(j),\beta_{1}(\overline{h})(j)}^{i_{2}}. \cdots .y_{\beta_{q-2}(\overline{h})(j),\beta_{q-1}(\overline{h})(j)}^{i_{q}} e_{j\beta_{q-1}(\overline{h})(j)}$.
\end{center}
\end{proposition}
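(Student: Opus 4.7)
The natural strategy is induction on the length $q$ of the monomial. For $q = 1$ the claim reduces to the very Definition \ref{generic} of $A_{i_{1},h_{i_{1}}}$, since $\beta_{0}(\overline{h}) = \phi_{h_{i_{1}}}$, so there is nothing to prove in the base case.

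For the inductive step, I would write $m = m' \cdot x_{i_{q}}$ where $m' = x_{i_{1}} \cdots x_{i_{q-1}}$ and the associated tuple is $\overline{h}' = (h_{i_{1}}, \ldots, h_{i_{q-1}})$. Observe that $\beta_{t}(\overline{h}') = \beta_{t}(\overline{h})$ for $t = 0, \ldots, q-2$, since these depend only on the first $q-1$ entries. The induction hypothesis then gives an explicit expression for $m'(A_{1,h_{1}}, \ldots, A_{k,h_{k}})$ as a sum indexed by $j$, with trailing matrix unit $e_{j,\beta_{q-2}(\overline{h})(j)}$. It remains to multiply this by $A_{i_{q}, h_{i_{q}}} = \sum_{i} y_{i,\phi_{h_{i_{q}}}(i)}^{i_{q}} e_{i,\phi_{h_{i_{q}}}(i)}$ and collect terms.

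The computation then boils down to the identity $e_{j,\beta_{q-2}(\overline{h})(j)} \cdot e_{i,\phi_{h_{i_{q}}}(i)} \neq 0$ iff $i = \beta_{q-2}(\overline{h})(j)$, in which case the product equals $e_{j,\phi_{h_{i_{q}}}(\beta_{q-2}(\overline{h})(j))}$. The key observation—this is really the only non-bookkeeping point—is the collapse
\[
\phi_{h_{i_{q}}}\bigl(\beta_{q-2}(\overline{h})(j)\bigr) \;=\; \phi_{h_{i_{q}}}\bigl(\phi_{h_{i_{q-1}} \cdots h_{i_{1}}}(j)\bigr) \;=\; \phi_{h_{i_{q}} h_{i_{q-1}} \cdots h_{i_{1}}}(j) \;=\; \beta_{q-1}(\overline{h})(j),
\]
which is an immediate consequence of the composition rule $\phi_{g_{j}} \circ \phi_{g_{k}} = \phi_{g_{j}g_{k}}$ noted in the Remark following the definition of $\phi_{g}$. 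Plugging this in, the new variable contributed by the last factor is exactly $y_{\beta_{q-2}(\overline{h})(j),\, \beta_{q-1}(\overline{h})(j)}^{i_{q}}$, and the trailing matrix unit becomes $e_{j,\beta_{q-1}(\overline{h})(j)}$, matching the claimed formula.

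There is no real obstacle: everything routine is guaranteed once one knows (i) the two-factor product formula of Remark \ref{operacoes}, (ii) the bijectivity of each $\phi_{g}$ (so that the index $i = \beta_{q-2}(\overline{h})(j)$ is forced and no cross terms survive), and (iii) the composition law for the $\phi_{g}$'s. The only thing one must be careful about is the indexing convention in the definition of $\beta_{t}$: the subscripts on the $h$'s are multiplied from right to left, which is precisely what makes the identity $\phi_{h_{i_{q}}} \circ \beta_{q-2}(\overline{h}) = \beta_{q-1}(\overline{h})$ work out.
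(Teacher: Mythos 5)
Your proposal is correct and follows essentially the same route as the paper: induction on $q$, with the base case read off from Definition \ref{generic} and the inductive step carried out by peeling off the last factor and applying the two-factor product rule of Remark \ref{operacoes} together with the composition law for the maps $\phi_{g}$. The only difference is that you make the collapse $\phi_{h_{i_{q}}}\circ\beta_{q-2}(\overline{h})=\beta_{q-1}(\overline{h})$ fully explicit, which the paper leaves implicit in its appeal to Remark \ref{operacoes}.
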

\begin{proof}
We prove by induction on $q$. If $q = 1$, the result follows from Definition \ref{generic}. Suppose that the result is valid for $k = q-1$. Hence $m(A_{1,h_{1}},\ldots,A_{k,h_{k}})$ is equal to

\begin{center}
$\sum_{j = 1}^{n} y_{j,\beta_{0}(\overline{h})(j)}^{i_{1}}y_{\beta_{0}(\overline{h})(j),\beta_{1}(\overline{h})(j)}^{i_{2}} \cdots y_{\beta_{q-3}(\overline{h})(j),\beta_{q-2}(\overline{h})(j)}^{i_{q-1}} e_{j\beta_{q-2}(\overline{h})(j)}$
\end{center}
and
\begin{center}
$A_{i_{q},h_{i_{q}}} = \sum_{j=1}^{n} y_{j,\phi_{h_{i_{q}}}(j)}^{i_{q}} e_{j \phi_{h_{i_{q}}}(j)}$.
\end{center}

Following the idea of Remark \ref{operacoes}, we can conclude that $m(A_{1,h_{1}},\ldots,A_{k,h_{k}})$ is
\begin{center}
$\sum_{j = 1}^{n} y_{j,\beta_{0}(\overline{h})(j)}^{i_{1}}y_{\beta_{0}(\overline{h})(j),\beta_{1}(\overline{h})(j)}^{i_{2}}. \cdots . y_{\beta_{q-2}(\overline{h})(j),\beta_{q-1}(\overline{h})(j)}^{i_{q}} e_{j\beta_{q-1}(\overline{h})(j)}$.
\end{center}
The proof is complete.
\end{proof}

\begin{corollary}\label{identidadesmonomiais}
Let $m \in K\langle X \rangle$ be a monomial. Then $m \notin T_{G}(M_{n}(K),gl_{n}(K))$.
\end{corollary}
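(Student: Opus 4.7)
The plan is to evaluate the monomial on appropriately chosen generic matrices and show the result is nonzero; by Proposition \ref{matrizesgenericas}, this is enough to conclude $m \notin T_G(M_n(K), gl_n(K))$.

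More precisely, write $m = x_{i_1}\cdots x_{i_q}$, set $h_{i_t} = \alpha(x_{i_t})$, and substitute each $x_{i_t}$ by the generic matrix $A_{i_t, h_{i_t}}$, which lies in $Gen_{\alpha(x_{i_t})} = Gen^{(-)}_{\alpha(x_{i_t})}$. Proposition \ref{monomio} then gives a completely explicit expression for $m(A_{1,h_1},\ldots,A_{k,h_k})$ as a sum of $n$ terms, where the $j$-th term has coefficient
\[
c_j = y^{i_1}_{j,\beta_0(\overline{h})(j)}\, y^{i_2}_{\beta_0(\overline{h})(j),\beta_1(\overline{h})(j)}\cdots y^{i_q}_{\beta_{q-2}(\overline{h})(j),\beta_{q-1}(\overline{h})(j)} \in K[\Omega]
\]
and sits in the matrix position $e_{j,\beta_{q-1}(\overline{h})(j)}$.

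Two simple observations finish the argument. First, since $K$ is an integral domain, so is the polynomial ring $K[\Omega]$; each $c_j$ is a product of elements of $\Omega$, hence nonzero in $K[\Omega]$. Second, as $\beta_{q-1}(\overline{h})$ is a composition of the bijections $\phi_g$ it is itself a bijection, so the $n$ matrix units $e_{j,\beta_{q-1}(\overline{h})(j)}$ are pairwise distinct and therefore linearly independent in $M_n(K[\Omega])$. Consequently $m(A_{1,h_1},\ldots,A_{k,h_k}) \neq 0$ in $Gen$, which by Proposition \ref{matrizesgenericas} yields $m \notin T_G(M_n(K), gl_n(K))$.

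There is no real obstacle here: the work has already been done in Proposition \ref{monomio}. The only minor point one has to verify carefully is that the $A_{i_t,h_{i_t}}$ are legitimate homogeneous substitutions (they are, by construction of the elementary grading on $Gen$) and that $K[\Omega]$ inherits the integral domain property from $K$, so products of $y$-variables do not vanish.
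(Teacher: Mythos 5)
Your argument is correct and is exactly the one the paper intends: Corollary \ref{identidadesmonomiais} is stated without proof precisely because Proposition \ref{monomio} already exhibits the evaluation of $m$ on generic matrices as a sum of terms with nonzero monomial coefficients in $K[\Omega]$ sitting in distinct matrix positions, and Proposition \ref{matrizesgenericas} transfers the conclusion to the pair $(M_{n}(K),gl_{n}(K))$. Your two verifications (that $K[\Omega]$ is an integral domain and that $\beta_{q-1}(\overline{h})$ is a bijection) are the right details to make this explicit.
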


%%%%%
%%%%% 12 de julho
%%%%%

\begin{proposition}\label{proposicaochave}
Let $m(x_1,\dots,x_l) = x_{i_{1}}.\cdots.x_{i_{q}}$ and $n(x_1,\dots,x_l) = x_{j_{1}}.\cdots.x_{j_{q}}$ be two monomials such that the matrices
\begin{center}
$n(A_{1,\alpha(x_{1})},\dots,A_{l,\alpha(x_{l})})$ and $m(A_{1,\alpha(x_{1})},\dots,A_{l,\alpha(x_{l})})$
\end{center}
have in the same position the same non-zero entry.

There exist matrix units $e_{a_{1}b_{1}} \in (M_{n}(K))_{\alpha(x_{i_{1}})},\ldots,e_{a_{q}b_{q}} \in (M_{n}(K))_{\alpha(x_{i_{q}})}, \newline e_{c_{1}d_{1}} \in (M_{n}(K))_{\alpha(x_{j_{q}})},\ldots,e_{c_{q}d_{q}} \in (M_{n}(K))_{\alpha(x_{j_{q}})}$ such that

\begin{center}
$e_{a_{1}b_{1}}.\cdots.e_{a_{q}b_{q}} = e_{c_{1}d_{1}}.\cdots.e_{c_{q}d_{q}}$.
\end{center}

Furthermore, there exists a permutation $\sigma \in S_{q}$ such that $e_{a_{\sigma(h)}b_{\sigma(h)}} = e_{c_{h}d_{h}}$ for all $h \in \widehat{q}$.
\end{proposition}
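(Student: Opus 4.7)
The plan is to read off both the matrix units and the permutation $\sigma$ directly from Proposition \ref{monomio}, exploiting that $K[\Omega]$ is a polynomial ring in the algebraically independent variables $\Omega$. Fix the position $(p,p')$ at which the two evaluations agree with a non-zero entry, and set $\overline{h}=(\alpha(x_{i_1}),\ldots,\alpha(x_{i_q}))$, $\overline{h}'=(\alpha(x_{j_1}),\ldots,\alpha(x_{j_q}))$. Proposition \ref{monomio} forces
\[
p' = \beta_{q-1}(\overline{h})(p) = \beta_{q-1}(\overline{h}')(p),
\]
and expresses the common entry as the equality
\[
Y_1 Y_2 \cdots Y_q \;=\; Z_1 Z_2 \cdots Z_q
\]
in $K[\Omega]$, where $Y_t = y^{i_t}_{\beta_{t-1}(\overline{h})(p),\,\beta_t(\overline{h})(p)}$ and $Z_h = y^{j_h}_{\beta_{h-1}(\overline{h}')(p),\,\beta_h(\overline{h}')(p)}$, with the convention $\beta_{-1}(\cdot)(p)=p$.

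Because these are monomials in the polynomial ring $K[\Omega]$ over the integral domain $K$, and each $Y_t$, $Z_h$ is a single variable in $\Omega$, the displayed equality forces a bijection of factors: there is $\sigma\in S_q$ such that $Y_{\sigma(h)}=Z_h$ for every $h\in\widehat{q}$. This identity of single variables matches the superscript ($i_{\sigma(h)}=j_h$) and both subscripts of $Y_{\sigma(h)}$ and $Z_h$.

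With this in hand I would set
\[
e_{a_t b_t} := e_{\beta_{t-1}(\overline{h})(p),\,\beta_t(\overline{h})(p)}, \qquad e_{c_h d_h} := e_{\beta_{h-1}(\overline{h}')(p),\,\beta_h(\overline{h}')(p)},
\]
and verify the three required properties. The matrix unit products telescope:
\[
e_{a_1 b_1}\cdots e_{a_q b_q} \;=\; e_{p,\beta_{q-1}(\overline{h})(p)} \;=\; e_{p,p'} \;=\; e_{c_1 d_1}\cdots e_{c_q d_q}.
\]
The coincidence of the subscripts of $Y_{\sigma(h)}$ and $Z_h$ gives exactly $e_{a_{\sigma(h)} b_{\sigma(h)}} = e_{c_h d_h}$. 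Finally, iterating the defining relation $g_{\phi_g(i)} = g_i g$ yields $g_{a_t} = g_p\, h_{i_1}\cdots h_{i_{t-1}}$ and $g_{b_t}=g_p\, h_{i_1}\cdots h_{i_t}$, whence
\[
\alpha(e_{a_t b_t}) = g_{a_t}^{-1}g_{b_t} = h_{i_t} = \alpha(x_{i_t}),
\]
and the analogous computation handles the $e_{c_h d_h}$.

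The main obstacle is essentially bookkeeping: Proposition \ref{monomio} has already reduced the entries of evaluated monomials to products of commuting variables, so the unique factorization of monomials in $K[\Omega]$ supplies the permutation for free, and the remaining work is just matching indices and checking degrees in the group.
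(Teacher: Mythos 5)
Your proposal is correct and follows essentially the same route as the paper: both read the $(p,p')$ entry off Proposition \ref{monomio}, use the fact that equal monomials in the polynomial ring $K[\Omega]$ must consist of the same variables (matching superscripts and both subscripts) to extract the permutation $\sigma$, and take the matrix units $e_{\beta_{t-1}(\overline{h})(p),\beta_{t}(\overline{h})(p)}$ whose product telescopes to $e_{pp'}$. Your explicit check that $\alpha(e_{a_{t}b_{t}})=\alpha(x_{i_{t}})$ via $g_{\phi_{g}(i)}=g_{i}g$ is a detail the paper leaves implicit, but it is not a different method.
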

\begin{proof}
Let us define:

\begin{flushleft}
 $\overline{h} = (\alpha(x_{i_{1}}),\ldots,\alpha(x_{i_{q}})) = (h_{1},\ldots,h_{q})$, $\overline{h'} = (\alpha(x_{j_{1}}),\ldots,\alpha(x_{j_{q}})) = (h_{1}',\ldots,h_{q}')$.
\end{flushleft}

According to the hypothesis, there exists $i \in \widehat{n}$ such that

\begin{center}
 $y_{i,\beta_{1}(\overline{h})(i)}^{i_{1}}y_{\beta_{1}(\overline{h})(i),\beta_{2}(\overline{h})(i)}^{i_{2}}. \cdots. y_{\beta_{q-2}(\overline{h})(i),\beta_{q-1}(\overline{h})(i)}^{i_{q}} e_{i\beta_{q-1}(\overline{h})(i)} =$
\end{center}
\begin{center}
 $y_{i,\beta_{1}(\overline{h'})(i)}^{j_{1}}y_{\beta_{1}(\overline{h'})(i),\beta_{2}(\overline{h'})(i)}^{j_{2}}. \cdots. y_{\beta_{q-2}(\overline{h'})(i),\beta_{q-1}(\overline{h'})(i)}^{j_{q}} e_{i\beta_{q-1}(\overline{h'})(i)}$.
\end{center}
Besides that $e_{i,\beta_{1}(\overline{h})(i)}e_{\beta_{1}(\overline{h})(i),\beta_{2}(\overline{h})(i)}. \cdots. e_{\beta_{q-2}(\overline{h})(i),\beta_{q-1}(\overline{h})(i)}$ is equal to
\begin{center}
$e_{i,\beta_{1}(\overline{h'})(i)}e_{\beta_{1}(\overline{h'})(i),\beta_{2}(\overline{h'})(i)}. \cdots. e_{\beta_{q-2}(\overline{h'})(i),\beta_{q-1}(\overline{h'})(i)}$.
\end{center}

So, for each $r \in \widehat{q}$, there exists a unique $s = \sigma(r) \in S_{q}$ such that
\begin{center}
$y_{\beta_{s-1}(\overline{h})(i),\beta_{s}(\overline{h})(i)}^{i_{s}} =
y_{\beta_{r-1}(\overline{h'})(i),\beta_{r}(\overline{h'})(i)}^{j_{r}}$,

$e_{\beta_{s-1}(\overline{h})(i)\beta_{s}(\overline{h})(i)} = e_{\beta_{r-1}(\overline{h'})(i)\beta_{r}(\overline{h'})(i)}$.
\end{center}
\end{proof}

\begin{remark}\label{comentario}
Let $m$ and $n$ be the monomials as given in Proposition \ref{proposicaochave} statement. If $x_{i_{1}} \neq x_{j_{1}}$, then
$y_{i,\beta_{1}(\overline{h})(i)}^{i_{1}} \neq y_{i,\beta_{1}(\overline{h'})(i)}^{j_{1}}$. Consequently, we have
$\sigma(1) > 1$. Now, let $k,l \in \widehat{q}$. Notice also that if $y_{\beta_{k-1}(\overline{h})(i),\beta_{k}(\overline{h})(i)}^{i_{k}} =
y_{\beta_{l-1}(\overline{h'})(i),\beta_{l}(\overline{h'})(i)}^{j_{l}}$, then $x_{i_{k}} = x_{j_{l}}$.
\end{remark}

The next proposition is a straightforward consequence of definition of $Gen$ and Proposition \ref{monomio}.

\begin{proposition}\label{multi}
Let $f$ be a weak graded polynomial identity of $Gen$.

Let $f = \sum_{i=1}^{l} f_{i}$ be a decomposition of $f$ in multi-homogeneous summands of different multi-degree. Then each $f_{i}$ is a weak graded polynomial identity of $Gen$.
\end{proposition}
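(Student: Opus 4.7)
The plan is to leverage the fact that each homogeneous component $(Gen)_g$ is a $K$-submodule of $Gen$, so scalar multiplication by elements of $K$ preserves admissible substitutions, and then to extract the multi-homogeneous summands by a Vandermonde argument. Write $f = \sum_{i=1}^{l} f_i$ with $f_i$ of multi-degree $\bar d_i = (d_{i,1}, \ldots, d_{i,m}) \in \mathbb{N}^m$, the $\bar d_i$ pairwise distinct. First I would fix arbitrary homogeneous elements $b_k \in (Gen)_{\alpha(x_k)}$ and scalars $\lambda_k \in K$. Since $\lambda_k b_k$ still lies in $(Gen)_{\alpha(x_k)}$ and $f$ is a weak graded identity of $Gen$, the substitution gives
\begin{equation*}
0 = f(\lambda_1 b_1, \ldots, \lambda_m b_m) = \sum_{i=1}^{l} \lambda_1^{d_{i,1}}\cdots \lambda_m^{d_{i,m}}\, f_i(b_1, \ldots, b_m)
\end{equation*}
in $Gen \subset M_n(K[\Omega])$, valid for every $(\lambda_1, \ldots, \lambda_m) \in K^m$.

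Next I would interpret this as a polynomial identity in the indeterminates $\lambda_1, \ldots, \lambda_m$ whose coefficients $f_i(b_1, \ldots, b_m)$ lie in $M_n(K[\Omega])$. Entry by entry, this gives polynomial expressions in $K[\Omega][\lambda_1, \ldots, \lambda_m]$ that vanish on the infinite set $K^m$. Since $K$ is an infinite integral domain and $K[\Omega]$ is a torsion-free $K$-module (indeed an integral domain containing $K$), I can apply the classical one-variable-at-a-time Vandermonde argument: fix $\lambda_2, \ldots, \lambda_m$ and view the relation as a polynomial in $\lambda_1$; since it vanishes for infinitely many values, each coefficient (a polynomial in $\lambda_2, \ldots, \lambda_m$ with entries in $K[\Omega]$) must vanish. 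Iterating over the remaining variables and using that the $\bar d_i$ are pairwise distinct yields $f_i(b_1, \ldots, b_m) = 0$ for every $i$.

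As $b_1, \ldots, b_m$ were arbitrary homogeneous elements of the prescribed degrees, each $f_i$ is itself a weak graded identity of $Gen$. The main technical point requiring care is passing from a scalar polynomial identity with matrix coefficients to the vanishing of those matrix coefficients; this is handled entry-by-entry, invoking that a nonzero polynomial over an integral domain cannot vanish on an infinite subset of the base domain. This is also the reason the hypothesis that $K$ is an infinite integral domain cannot be dropped. No further structural input beyond the $K$-module structure of the graded components is needed, which is why the author can describe the proof as a straightforward consequence of the construction of $Gen$ and the explicit evaluation formula of Proposition \ref{monomio}.
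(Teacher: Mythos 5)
Your proof is correct, but it takes a genuinely different route from the one the paper intends. The paper gives no written proof: the sentence preceding the statement says the proposition is a straightforward consequence of the definition of $Gen$ and Proposition \ref{monomio}. The intended argument is therefore: evaluate $f$ on the generic matrices $A_{k,\alpha(x_{k})}$; by Proposition \ref{monomio}, each entry of $m(A_{1,\alpha(x_{1})},\ldots,A_{l,\alpha(x_{l})})$, for a monomial $m$, is a monomial in the commuting variables $y^{k}_{i,j}$ whose total degree in the family $\{y^{k}_{i,j}\}$ equals $\deg_{x_{k}} m$; hence summands $f_{i}$ of distinct multidegree contribute to distinct multihomogeneous components of $K[\Omega]$ and cannot cancel, so each $f_{i}(A_{1,\alpha(x_{1})},\ldots,A_{l,\alpha(x_{l})})=0$, and since vanishing on the generic matrices characterizes the identities of the pair (Proposition \ref{matrizesgenericas}), each $f_{i}$ is a weak graded identity. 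Your route is the classical Vandermonde substitution $b_{k}\mapsto \lambda_{k}b_{k}$: it uses only that the graded components are $K$-submodules and that $K$ is an infinite integral domain (so that $K[\Omega]$ is a domain in which a nonzero one-variable polynomial has finitely many roots), and it would apply verbatim to any graded pair over such a ring, not just to $Gen$. The price is the coefficient-extraction step for a polynomial map vanishing on $K^{m}$, which you handle correctly entry by entry. The paper's route avoids scalar substitutions entirely because the separation of multidegrees is already encoded in the combinatorics of the generic matrices; yours is more general and self-contained, the paper's is shorter in context. Both are valid.
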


\section{Graded identities of the pair $(M_{n}(K),gl_{n}(K))$}

\begin{proposition}\label{identities}
The following polynomials are graded identities of the pair \newline $(M_{n}(K),gl_{n}(K))$.

\begin{enumerate}
\item $[x_{1},x_{2}], \ \ \alpha(x_{1}) = \alpha(x_{2}) = 1_{G}.$
\item $x_{1}x_{2}x_{3} - x_{3}x_{2}x_{1},\ \ \alpha(x_{1}) = \alpha(x_{3}) = (\alpha(x_{2}))^{-1}.$
\end{enumerate}

\end{proposition}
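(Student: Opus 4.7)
The plan is to verify each identity by direct computation on the homogeneous components of $M_{n}(K)$ under the elementary grading. For (1), I would use the concrete description of the neutral component; for (2), I would use Proposition \ref{matrizesgenericas} to pass to the generic matrix algebra and then apply Proposition \ref{monomio} to both monomials of the difference.

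For identity (1), since $e_{ij}$ carries $G$-degree $g_{i}^{-1}g_{j}$, the matrix unit $e_{ij}$ lies in the neutral component exactly when $g_{i}=g_{j}$. Because the $g_{k}$ are distinct, this forces $i=j$, so $(M_{n}(K))_{1_{G}}$ is the commutative subalgebra of diagonal matrices. Since $(gl_{n}(K))_{1_{G}}=(M_{n}(K))_{1_{G}}$, the commutator $[x_{1},x_{2}]$ vanishes on every substitution of homogeneous arguments of neutral degree.

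For identity (2), set $g=\alpha(x_{1})=\alpha(x_{3})$, so $\alpha(x_{2})=g^{-1}$. Both monomials $m=x_{1}x_{2}x_{3}$ and $n=x_{3}x_{2}x_{1}$ have the same tuple of degrees $\overline{h}=(g,g^{-1},g)$. Hence in Proposition \ref{monomio} the bijections $\beta_{t}(\overline{h})$ coincide for the two monomials: $\beta_{0}=\phi_{g}$, $\beta_{1}=\phi_{g^{-1}g}=\phi_{1_{G}}$ which is the identity on $\widehat{n}$, and $\beta_{2}=\phi_{gg^{-1}g}=\phi_{g}$. Consequently, both generic evaluations are supported on the same matrix units $e_{j,\phi_{g}(j)}$, and the coefficient of $e_{j,\phi_{g}(j)}$ in $m(A_{1,g},A_{2,g^{-1}},A_{3,g})$ is $y_{j,\phi_{g}(j)}^{1}\,y_{\phi_{g}(j),j}^{2}\,y_{j,\phi_{g}(j)}^{3}$, while in $n(A_{1,g},A_{2,g^{-1}},A_{3,g})$ it is the order-reversed product. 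These agree because $K[\Omega]$ is commutative, so the two generic matrices coincide; by Proposition \ref{matrizesgenericas} this gives identity (2).

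There is no real obstacle in this argument: once the neutral component has been identified and Proposition \ref{monomio} is in hand, both claims reduce to the commutativity of the polynomial ring $K[\Omega]$ and of the diagonal matrices. As a purely elementary alternative that avoids generic matrices, one may parametrize arbitrary homogeneous evaluations by $X_{1}=\sum_{i}a_{i}e_{i,\phi_{g}(i)}$, $X_{2}=\sum_{j}b_{j}e_{j,\phi_{g^{-1}}(j)}$, $X_{3}=\sum_{k}c_{k}e_{k,\phi_{g}(k)}$, note that $\phi_{g^{-1}}(\phi_{g}(i))=i$, and verify that both $X_{1}X_{2}X_{3}$ and $X_{3}X_{2}X_{1}$ equal $\sum_{i}a_{i}\,b_{\phi_{g}(i)}\,c_{i}\,e_{i,\phi_{g}(i)}$.
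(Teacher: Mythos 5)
Your argument is correct, but it takes a genuinely different route from the paper: the paper disposes of this proposition by simply citing Lemma 4.1 of \cite{Bahturin-Drensky}, whereas you verify both identities directly. For (1) you correctly use that $(g_{1},\ldots,g_{n})$ lists the $n$ distinct elements of $G$, so $e_{ij}$ is neutral only when $g_{i}=g_{j}$, i.e.\ $i=j$, and the neutral component is the commutative algebra of diagonal matrices. For (2) your computation via Proposition \ref{monomio} is right: both monomials have degree tuple $(g,g^{-1},g)$, so $\beta_{0}=\beta_{2}=\phi_{g}$ and $\beta_{1}=\phi_{1_{G}}=\mathrm{id}$, and the two evaluations differ only by the order of commuting variables in $K[\Omega]$. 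Your elementary alternative --- substituting arbitrary homogeneous elements $\sum_{i}a_{i}e_{i,\phi_{g}(i)}$ and using $\phi_{g^{-1}}(\phi_{g}(i))=i$ --- is in fact the cleanest version, since it bypasses the (harmless but implicit) step of passing from vanishing on the particular generic matrices $A_{k,\alpha(x_{k})}$ to vanishing on all homogeneous substitutions. What your approach buys is a self-contained verification inside the paper's own framework, and it makes visible exactly where the hypothesis $\alpha(x_{1})=\alpha(x_{3})=(\alpha(x_{2}))^{-1}$ is used; what the paper's citation buys is brevity.
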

\begin{proof}
It follows from (\cite{Bahturin-Drensky}, Lemma 4.1).
\end{proof}

\begin{definition}\label{identidadesessenciais}
Let $J$ be the weak $T_{G}$-ideal generated by the following identities
\begin{enumerate}
\item (Identities of type 1) $[h_{1},h_{2}]$, where $h_{1},h_{2}$ are monomials, $\alpha(h_{1}) = \alpha(h_{2}) = 1_{G}$, and $h_{1}h_{2}$ is multi-linear.
\item (Identities of type 2) $h_{1}h_{2}h_{3} - h_{3}h_{2}h_{1}$, where $h_{1},h_{2},h_{3}$ are monomials, $\alpha(h_{1}) = \alpha(h_{3}) = (\alpha(h_{2}))^{-1}$, and $h_{1}h_{2}h_{3}$ is multi-linear.
\end{enumerate}
\end{definition}

\begin{definition}
We denote the weak $T_{G}$-ideal generated by the identities of type 1 by $J_{1}$. We denote the weak $T_{G}$-ideal generated by the identities of type 2 by $J_{2}$.
\end{definition}

Notice that $J = J_{1} + J_{2}$. Let $S_{1} \subset J_{1}$, and let $S_{2} \subset J_{2}$. If $J_{1} = \langle S_{1} \rangle_{w}, J_{2} = \langle S_{2} \rangle_{w}$, then $J = \langle S_{1}, S_{2} \rangle_{w}$.

\begin{remark}\label{atencao}
Let us consider $H_{1,2} = [h_{1},h_{2}], H_{3,4,5} = h_{3}h_{4}h_{5} - h_{5}h_{4}h_{3}$,
where $h_{1},h_{2}, h_{3}, h_{4}, h_{5}$ are monomials,
$\alpha(h_{1}) = \alpha(h_{2}) = 1_{G}$ and $\alpha(h_{3}) = \alpha(h_{5}) = (\alpha(h_{4}))^{-1}$.
The polynomials $H_{1,2}$ and $H_{3,4,5}$ are graded identities of the pair $(M_{n}(K),gl_{n}(K))$. Consider convenient weak graded endomorphisms of $K\langle X \rangle$, we can prove that
\begin{center}
$H_{1,2} \in J_{1}$, and $H_{3,4,5} \in J_{2}$.
\end{center}
By modulo $J$, we have $h_{1}h_{2} \equiv h_{2}h_{1}, h_{3}h_{4}h_{5} \equiv h_{5}h_{4}h_{3}$.
\end{remark}

\section{Preliminary Results}\label{Preliminary Results}

The next two lemmas follow some ideas of Silva, see (\cite{Silva}, Lemma 4.5 and Lemma 4.6). The first proposition is a straightforward consequence of Proposition \ref{monomio}.

\begin{lemma}\label{lematecnico2}[Lemma 4.5, \cite{Silva}]
Let $\overline{m}(x_1,\dots,x_q)$ and $\overline{n}(x_1,\dots,x_q)$ be two monomials that start with the same variable. Let $m(x_1,\dots, x_q)$ and $n(x_1,\dots, x_q)$ be the monomials obtained from $\overline{m}$ and $\overline{n}$ respectively by deleting the first variable. If there exist matrices $A_{1,\alpha(x_{1})},\dots, A_{q,1,\alpha(x_{q})}$ such that
\begin{center}
$\overline{m}(A_{1,\alpha(x_{1})},\dots, A_{q,1,\alpha(x_{q})})$ and $\overline{n}(A_{1,\alpha(x_{1})},\dots, A_{q,1,\alpha(x_{q})})$
\end{center}
have in the same position the same non-zero entry, then
\begin{center}
$m(A_{1,\alpha(x_{1})},\dots, A_{q,1,\alpha(x_{q})})$ and $n(A_{1,\alpha(x_{1})},\dots, A_{q,1,\alpha(x_{q})})$
\end{center}
also have in the same position the same non-zero entry.
\end{lemma}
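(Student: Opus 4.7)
The plan is to apply Proposition~\ref{monomio} explicitly to both $\overline{m}$ and $\overline{n}$, read off the hypothesis as an identity in the polynomial ring $K[\Omega]$, and then cancel the common first factor. Write $\overline{m}=x_{i_1}x_{i_2}\cdots x_{i_q}$ and $\overline{n}=x_{j_1}x_{j_2}\cdots x_{j_q}$ with $x_{i_1}=x_{j_1}$, and set $h=\alpha(x_{i_1})=\alpha(x_{j_1})$. Let $\overline{h}$ and $\overline{h'}$ denote the associated degree-tuples. By Proposition~\ref{monomio}, in row $r$ each of $\overline{m}(A_{\ldots})$ and $\overline{n}(A_{\ldots})$ has a single non-zero entry, located at column $\beta_{q-1}(\overline{h})(r)$ or $\beta_{q-1}(\overline{h'})(r)$ respectively, and given by an explicit product of $y$-variables.

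The hypothesis supplies a row $j$ at which these two entries coincide. Equality of the columns forces $\beta_{q-1}(\overline{h})(j)=\beta_{q-1}(\overline{h'})(j)$, while equality of the $y$-products is an identity in the commutative integral domain $K[\Omega]$. Because $\overline{m}$ and $\overline{n}$ begin with the same variable $x_{i_1}=x_{j_1}$, the first factor on each side is literally the same variable $y_{j,\phi_h(j)}^{i_1}$, and one may cancel it. What remains is an equality between the two tail products in $K[\Omega]$.

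It then suffices to identify these tails with the row-$\phi_h(j)$ entries of $m(A_{\ldots})$ and $n(A_{\ldots})$. Writing $\overline{h}^{(2)}=(\alpha(x_{i_2}),\ldots,\alpha(x_{i_q}))$ for the degree-tuple of $m$, the composition identity $\phi_{g_2}\circ\phi_{g_1}=\phi_{g_2g_1}$ from the opening remark of Section~\ref{Generic matrices} yields
\begin{equation*}
\beta_{t}(\overline{h}^{(2)})(\phi_h(j)) \;=\; \phi_{\alpha(x_{i_{2+t}})\cdots\alpha(x_{i_2})}\!\bigl(\phi_h(j)\bigr) \;=\; \phi_{\alpha(x_{i_{2+t}})\cdots\alpha(x_{i_1})}(j) \;=\; \beta_{t+1}(\overline{h})(j),
\end{equation*}
and by Proposition~\ref{monomio} this is precisely what one needs so that the row, column, and $y$-product of $m(A_{\ldots})$ at row $\phi_h(j)$ match the tail of the corresponding data of $\overline{m}(A_{\ldots})$ at row $j$. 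The same identity applied to $\overline{h'}$ handles $n$. Combining this with the cancellation of the previous paragraph, $m(A_{\ldots})$ and $n(A_{\ldots})$ share a non-zero entry in position $\bigl(\phi_h(j),\beta_{q-1}(\overline{h})(j)\bigr)$, as claimed. The only substantive step is the index-shift identity displayed above; once it is in hand, the proof reduces to routine cancellation of a common monomial factor in $K[\Omega]$.
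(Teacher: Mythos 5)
Your argument is correct and is exactly the route the paper intends: the paper gives no written proof of this lemma, merely asserting it is a straightforward consequence of Proposition~\ref{monomio}, and your proof carries that out in full --- reading off the row-$j$ entries from Proposition~\ref{monomio}, cancelling the common first variable $y^{i_1}_{j,\phi_h(j)}$ in the integral domain $K[\Omega]$, and using the index-shift identity $\beta_{t}(\overline{h}^{(2)})(\phi_h(j))=\beta_{t+1}(\overline{h})(j)$ to recognize the tails as the row-$\phi_h(j)$ entries of $m$ and $n$. The index-shift computation is the one detail the paper leaves implicit, and you have supplied it correctly (consistent with the composition rule stated in the paper's opening remark of Section~\ref{Generic matrices}).
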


\begin{definition}
Let $m = x_{i_{1}}\ldots x_{i_{q}}$ be a monomial and let $\sigma \in S_{q}$ be a permutation. Let us define $m_{\sigma} = x_{i_{\sigma(1)}}\ldots x_{i_{\sigma(q)}}$. For any two integers $1 \leq k \leq l \leq q$, denote $m_{\sigma}^{[k,l]}$ the subword obtained from $m$ by discarding the first $k-1$ and the last $q - l$ factors:
\begin{center}
$m_{\sigma}^{[k,l]} = x_{i_{\sigma(k)}}.\cdots.x_{i_{\sigma(l)}}$.
\end{center}
\end{definition}

\begin{lemma}\label{essencial}
Let $m(x_1,\dots,x_q) = x_{i_{1}}\ldots x_{i_{l}}$ and $n(x_1,\dots,x_q) = x_{j_{1}}\ldots x_{j_{l}}$ be two monomials such that the matrices
\begin{center}
$n(A_{1,\alpha(x_{1})},\dots,A_{q,\alpha(x_{q})})$ and $m(A_{1,\alpha(x_{1})},\dots,A_{q,\alpha(x_{q})})$
\end{center}
have in the same position the same non-zero entry, then \[m(x_1,x_2\dots,x_q)\equiv n(x_1,x_2\dots,x_q) \mbox{ modulo }J.\]
\end{lemma}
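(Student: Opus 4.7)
The plan is to proceed by induction on the common length $l$ of $m$ and $n$. The base case $l = 1$ is immediate: two single generic variables produce the same nonzero entry only when they coincide. For the inductive step I compare the first letters $x_{i_1}$ and $x_{j_1}$. In the easy case $x_{i_1} = x_{j_1}$, Lemma~\ref{lematecnico2} guarantees that the truncated monomials $x_{i_2}\cdots x_{i_l}$ and $x_{j_2}\cdots x_{j_l}$ again have matching nonzero entries at the same position; the induction hypothesis yields their congruence modulo $J$, and left-multiplying by the common first letter finishes this case.

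The real work is the case $x_{i_1} \ne x_{j_1}$. Proposition~\ref{proposicaochave} supplies a permutation $\sigma \in S_l$ matching the matrix-unit factors of the two generic expansions, and Remark~\ref{comentario} forces $s := \sigma(1) > 1$ together with $x_{i_s} = x_{j_1}$. Splitting $m = u \cdot x_{i_s} \cdot v$ with $u = x_{i_1}\cdots x_{i_{s-1}}$, the matching at position $s$ translates, in the language of Proposition~\ref{monomio}, to $\beta_{s-2}(\overline{h})(i) = i$. Since $u$ is $G$-homogeneous, the existence of a nonzero $(i,i)$-entry in its generic evaluation forces $\alpha(u) = 1_G$.

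My goal at this point is to rewrite $m$ modulo $J$ so that its first letter becomes $x_{i_s} = x_{j_1}$, reducing to the easy case. If $\alpha(x_{i_s}) = 1_G$, then $[u, x_{i_s}]$ is a Type~1 generator and $m \equiv x_{i_s}\,u\,v$ modulo $J$ in a single step. If $\alpha(x_{i_s}) \ne 1_G$, I exploit the coincidence of the two matrix-unit multisets to locate an index $1 \le k < s$ at which the prefix of $u$ has already reached the vertex $\phi_{\alpha(x_{i_s})}(i)$; rearranging $u$ modulo $J$ by applying the lemma itself to this strictly shorter prefix yields a factorization $u \equiv u_1 u_2$ modulo $J$ with $\alpha(u_1) = \alpha(x_{i_s})$ and $\alpha(u_2) = \alpha(x_{i_s})^{-1}$. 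A Type~2 identity then gives $u_1 u_2\, x_{i_s} \equiv x_{i_s}\, u_2\, u_1$ modulo $J$, so $m \equiv x_{i_s}\, u_2\, u_1\, v$ modulo $J$. In either sub-case the resulting monomial starts with $x_{j_1}$ and, since $J \subset T_G(M_n(K),gl_n(K))$ by Proposition~\ref{identities}, still shares the prescribed nonzero entry with $n$; the easy case then applies and closes the induction.

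The principal obstacle is the split argument in the second sub-case: one must argue from the shared multiset of matrix-unit edges that the prefix of $u$ really does pass through the target index $\phi_{\alpha(x_{i_s})}(i)$, and that the Type~2 hypothesis $\alpha(u_1) = \alpha(x_{i_s}) = \alpha(u_2)^{-1}$ can genuinely be arranged after a $J$-permutation of $u$. Pressing the detailed analysis of the chains $\beta_t(\overline{h})$ and $\beta_t(\overline{h'})$ is where most of the technical effort will sit; a clean structural reason for the existence of the split, rather than an enumeration of sub-configurations, is what I would search for to keep the proof short.
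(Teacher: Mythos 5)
Your base case and the same-first-letter case match the paper's argument exactly. The gap is in the second sub-case of the hard step, and it is fatal as stated: the closed walk at $i$ traced out by the prefix $u = x_{i_1}\cdots x_{i_{s-1}}$ need \emph{not} pass through the vertex $\phi_{\alpha(x_{i_s})}(i)$, so the factorization $u \equiv u_1u_2$ with $\alpha(u_1) = \alpha(x_{i_s})$ and $\alpha(u_2) = \alpha(x_{i_s})^{-1}$ may simply not exist, and no $J$-rearrangement of $u$ can create it (congruent monomials have equal generic evaluations, hence the same walk up to permutation of edges). Concretely, take $G = \mathbb{Z}_3$, $n = 3$, the grading induced by $(\overline{0},\overline{1},\overline{2})$, and variables with $\alpha(x_1)=\alpha(x_4)=\overline{1}$, $\alpha(x_2)=\alpha(x_3)=\overline{2}$. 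Then $m = x_1x_2x_3x_4$ and $n = x_3x_4x_1x_2$ both have $(1,1)$-entry $y^1_{12}y^2_{21}y^3_{13}y^4_{31}$ (the walks are $1\to2\to1\to3\to1$ and $1\to3\to1\to2\to1$). Here $x_{j_1}=x_3$ sits at position $s=3$ of $m$, the prefix $u = x_1x_2$ traces $1\to2\to1$, and the target vertex $\phi_{\overline{2}}(1)=3$ is never visited by $u$; moreover $\alpha(x_3)=\overline{2}\neq\overline{0}$, and no prefix of $u$ (nor of its unique, non-congruent rearrangement $x_2x_1$) has degree $\overline{2}$. So neither of your two sub-cases applies, yet the congruence $m\equiv n$ does hold, via the type 1 identity $[x_1x_2,\,x_3x_4]$ applied to blocks of length two.

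This points to the structural reason your single-variable strategy cannot work: what must be moved to the front is not the lone variable $x_{j_1}$ inside $m$, but a whole block. The paper instead decomposes $n$ as $n^{[1,p-1]}n^{[p,r-1]}n^{[r,s]}n^{[s+1,l]}$, where $r = \sigma^{-1}(1)$ and $p,s$ come from the least $t$ with $\sigma^{-1}(t+1) < \sigma^{-1}(1)$; the three initial blocks then automatically have degrees $g$, $g^{-1}$, $g$ for $g = g_{a_1}^{-1}g_{a_{t+1}}$, so a type 2 (or, in the degenerate case, type 1) identity applied to these \emph{blocks} moves $n^{[r,s]}$ --- which begins with $m$'s first variable --- to the front, after which your easy case and induction finish the argument. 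If you want to salvage your write-up, replace the attempted split of $u$ by this block decomposition of $n$.
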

\begin{proof}
According to the hypothesis,
\begin{center}
$n(A_{1,\alpha(x_{1})},\dots,A_{q,\alpha(x_{q})})$ and $m(A_{1,\alpha(x_{1})},\dots,A_{q,\alpha(x_{q})})$
\end{center}
have in the same position the same non-zero entry. Assume that this entry is $(i,j)$.

If $m$ and $n$ have only one variable, the proof is obvious. If $m$ and $n$ start with the same variable, the result follows from Lemma \ref{lematecnico2} and induction.

From now on, we will assume that $m$ and $n$ do not start with the same variable. Furthermore, we admit that they have more than one variable.

According to Proposition \ref{proposicaochave}, there exist matrix units

\begin{center}
$e_{a_{1}b_{1}} \in (M_{n}(K))_{\alpha(x_{i_{1}})},\ldots,e_{a_{l}b_{l}} \in (M_{n}(K))_{\alpha(x_{i_{l}})},$
\end{center}
\begin{center}
$e_{c_{1}d_{1}} \in (M_{n}(K))_{\alpha(x_{j_{1}})},\ldots,e_{c_{l}d_{l}} \in (M_{n}(K))_{\alpha(x_{j_{l}})}$
\end{center}
so that
\begin{center}
$e_{ij} = e_{a_{1}b_{1}}.\cdots.e_{a_{l}b_{l}} = e_{c_{1}d_{1}}.\cdots.e_{c_{l}d_{l}}$.
\end{center}
Moreover, there exists a permutation $\sigma \in S_{l}$ so that $e_{a_{\sigma(h)}b_{\sigma(h)}} = e_{c_{h}d_{h}}$ for all $h \in \widehat{l}$.

It is clear that $i = a_{1} = c_{1}, j = b_{l} = d_{l}$. In addition $b_{i} = a_{i+1}$, and $d_{i} = c_{i+1}$ for all $i \in \widehat{l-1}$.

According to Remark \ref{comentario}, we have $\sigma(1) > 1$. Let $t$ be the least integer such that $\sigma^{-1}(t+1) < \sigma^{-1}(1)$. Setting $p = \sigma^{-1}(t+1), r = \sigma^{-1}(1)$, and $s = \sigma^{-1}(t)$, we have $p < r \leq s$. Let us consider $n = n^{[1,p-1]}n^{[p,r-1]}n^{[r,s]}n^{[s+1,l]}$.

We can check that

\begin{enumerate}
\item $\alpha(n^{[1,p-1]}) = g_{a_{\sigma(1)}}^{-1}g_{a_{\sigma(p)}} = g_{c_{1}}^{-1}g_{a_{t+1}} = g_{a_{1}}^{-1}g_{a_{t+1}}$.
\item $\alpha(n^{[p,r-1]}) = g_{a_{\sigma(p)}}^{-1}g_{a_{\sigma(r)}} = g_{a_{t+1}}^{-1}g_{a_{1}}$.
\item $\alpha(n^{[r,s]}) = g_{a_{\sigma(r)}}^{-1}g_{b_{\sigma(s)}} = g_{a_{1}}^{-1}g_{b_{t}} = g_{a_{1}}^{-1}g_{a_{t + 1}}$.
\end{enumerate}

Let us define:

$\widetilde{m_{1}} = n^{[r,s]}n^{[1,p-1]}n^{[p,r-1]}n^{[s+1,l]}$ and $\widetilde{m_{2}} = n^{[r,s]}n^{[p,r-1]}n^{[1,p-1]}n^{[s+1,l]}$. The commuting variable of $\Omega$ associated to $e_{a_{1}b_{1}}$ is equal to the commuting variable associated to $e_{c_{r}d_{r}}$.
The monomials $m, \widetilde{m_{1}}, \widetilde{m_{2}}$ start with the same variable. Remind the Remark \ref{atencao}.

If $\alpha(n^{[1,p-1]}) =  \alpha(n^{[p,r-1]}) = \alpha(n^{[r,s]})$, then $m \equiv \widetilde{m}_{1} \ \ mod \ \ J$.
If $\alpha(n^{[1,p-1]}) =  (\alpha(n^{[p,r-1]}))^{-1} = \alpha(n^{[r,s]})$, then $m \equiv \widetilde{m}_{2} \ \ mod \ \ J$.

The result follows from Lemma \ref{lematecnico2} and induction.

\end{proof}

\section{An important result}

We now describe the polynomials that form a basis for the graded identities of the pair $(M_{n}(K), gl_{n}(K))$. 

\begin{theorem}
The sets $T_{G}(M_{n}(K),gl_{n}(K))$ and $J$ are equal.
\end{theorem}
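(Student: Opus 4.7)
The plan is to prove the two inclusions separately. The inclusion $J \subseteq T_{G}(M_{n}(K),gl_{n}(K))$ is immediate from Proposition~\ref{identities}: the generators of $J$ are obtained from those two single-variable identities by substituting monomials of matching $G$-degree for the variables, and such substitutions preserve the property of being a graded identity of the pair, as noted in Remark~\ref{atencao}.

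For the reverse inclusion, I would take $f \in T_{G}(M_{n}(K),gl_{n}(K))$ and show $f \in J$. By Proposition~\ref{matrizesgenericas}, $f$ is equally a weak graded identity of the pair $(Gen, Gen^{(-)})$, and by Proposition~\ref{multi} I may reduce to the multi-homogeneous case. Writing $f=\sum_{i} c_{i} m_{i}$ with the $m_{i}$ distinct multi-homogeneous monomials of a common total $G$-degree $g$, Proposition~\ref{monomio} yields $m_{i}(A_{1,\alpha(x_{1})},\dots,A_{k,\alpha(x_{k})}) = \sum_{j=1}^{n} P_{i,j}\, e_{j,\phi_{g}(j)}$ for explicit products $P_{i,j} \in K[\Omega]$ of commuting variables. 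The assumption $f(A_{\cdot})=0$ in $M_{n}(\Omega)$ then forces $\sum_{i} c_{i} P_{i,j} = 0$ in the polynomial ring $K[\Omega]$ for each $j \in \widehat{n}$.

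The core step is to collapse the sum modulo $J$ using Lemma~\ref{essencial}. Declare $m_{i} \sim m_{i'}$ iff $m_{i} \equiv m_{i'} \pmod{J}$. Because $J$ already consists of identities of $(Gen, Gen^{(-)})$, equivalent monomials evaluate to the same matrix on generic matrices, so $P_{i,j}=P_{i',j}$ for every row $j$. Conversely, Lemma~\ref{essencial} guarantees that a single-row coincidence $P_{i,j_{0}} = P_{i',j_{0}}$ already implies $m_{i} \sim m_{i'}$. Hence the $\sim$-classes partition the monomials in a way that is compatible with every row, and for each $j$ one has $0 = \sum_{i} c_{i} P_{i,j} = \sum_{C}\bigl(\sum_{m_{i} \in C} c_{i}\bigr) P_{C,j}$ with pairwise distinct nonzero $P_{C,j}$. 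Since $K[\Omega]$ is a polynomial ring over an integral domain, $\sum_{m_{i} \in C} c_{i} = 0$ for each class $C$, so $f \equiv \sum_{C}\bigl(\sum_{m_{i} \in C} c_{i}\bigr)\, m_{C} \equiv 0 \pmod{J}$, i.e.\ $f \in J$.

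The principal obstacle I anticipate is the \emph{bridge} between a single-row coincidence and a full-fledged congruence modulo $J$: namely, verifying that $P_{i,j_{0}} = P_{i',j_{0}}$ at one row propagates, via Lemma~\ref{essencial}, to $P_{i,j} = P_{i',j}$ at every row. This is exactly where the combinatorial work of Proposition~\ref{proposicaochave} and Lemma~\ref{essencial} pays off, through the chain $P_{i,j_{0}} = P_{i',j_{0}} \Rightarrow m_{i} \equiv m_{i'} \pmod{J} \Rightarrow m_{i}(A_{\cdot}) = m_{i'}(A_{\cdot}) \Rightarrow P_{i,j} = P_{i',j}$ for all $j$. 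Once this is in place, the completeness argument reduces to the familiar linear independence of distinct monomials in a commutative polynomial ring over an integral domain.
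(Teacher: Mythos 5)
Your proposal is correct and takes essentially the same approach as the paper: both reduce to multi\-/homogeneous polynomials evaluated on the generic matrices of $Gen$ and then use Lemma~\ref{essencial} to conclude that monomials whose evaluations share a non-zero entry are congruent modulo $J$. The only difference is organizational --- the paper runs a minimal-counterexample induction on the number of monomials, cancelling one pair $m_{1}-m_{i}\in J$ at a time, while you partition the monomials into congruence classes in one pass and invoke linear independence of distinct monomials in $K[\Omega]$ over the integral domain $K$ to force each class's coefficient sum to vanish; these are two packagings of the same argument.
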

\begin{proof}

According to Proposition \ref{identities} and Definition \ref{identidadesessenciais}, we have
\begin{center}
$J \subset T_{G}(M_{n}(K),gl_{n}(K))$.
\end{center}

We are going to prove that $T_{G}(M_{n}(K),gl_{n}(K)) \subset J $. According to Proposition \ref{matrizesgenericas}, the pairs $(M_{n}(K),gl_{n}(K))$ and
$(Gen,Gen^{(-)})$ have the same graded identities.

Assume, for the sake of contradiction that $T_{G}(M_{n}(K),gl_{n}(K)) \subsetneq J $. Thus, there exists

\begin{center}
$f(x_{1},\ldots,x_{l}) = \sum_{j=1}^{t} \lambda_{j} m_{j}(x_{1},\ldots,x_{l}) \in T_{G}(M_{n}(K),gl_{n}(K)) - J$,
\end{center}
where $\lambda_{j} \in K - \{0\}$ for all $j \in \widehat{t}$, and each $m_{j}$ is a monomial. By Corollary \ref{identidadesmonomiais}, we have $m_{j} \notin T_{G}(M_{n}(K),gl_{n}(K))$. By Proposition \ref{multi}, we can assume without loss of generality that $f$ is multi-homogeneous.

%%%%%
%%%%% 19 de julho
%%%%%

For convenience reasons, we assume that $t$ is
\begin{center}
$min\{k \in \mathbb{N}| \sum_{j=1}^{k} \eta_{j} n_{j}(x_{1},\ldots,x_{l}) \in T_{G}(M_{n}(K),gl_{n}(K)) - J\}$,
\end{center}
where $\eta_{j} \in K - \{0\}$ and $n_{j}$ is a monomial. Considering that $f \in T_{G}(M_{n}(K),gl_{n}(K))$, we have

\begin{center}
$f(A_{1,\alpha(x_{1})},\ldots,A_{l,\alpha(x_{l})}) = 0$.
\end{center}

Thus, there exists $i \in \{t\} - \{1\}$ such that
\begin{center}
$m_{1}(A_{1,\alpha(x_{1})},\ldots,A_{l,\alpha(x_{l})})$ and $m_{i}(A_{1,\alpha(x_{1})},\ldots,A_{l,\alpha(x_{l})})$
\end{center}
have in the same position the same non-zero entry. Hence, by Lemma \ref{essencial}, we have $m_{1} \equiv m_{i} \ \ mod \ \ J $.

Notice that $g = f - \lambda_{i}(m_{1} + m_{i})$ is equal to
\begin{center}
$(\lambda_{1} - \lambda_{i})m_{1} + \sum_{j=2}^{k}(\lambda_{j}m_{j}) - \lambda_{i}m_{i}  \in T_{G}(M_{n}(K),gl_{n}(K)) - J$.
\end{center}
This fact contradicts the minimality of $t$.

\end{proof}

\section{The $\mathbb{Z}_{3}$-graded identities of the pair $(M_{3}(K), gl_{3}(K))$}\label{Z3}

We now discuss about the $\mathbb{Z}_{3}$-graded case. From now on, $G$ denotes the abelian group $\mathbb{Z}_{3}$. Consider the endomorphisms of $K\langle X \rangle$ defined by the following rules.
In the first two endomorphisms, we consider $\alpha(x_{r}) = \overline{0}$. In the third, we consider $\alpha(x_{r+3}) = \overline{0}$.

\begin{enumerate}
\item For each integer $r \geq 2$, let $\mu_{r-1}(x_{i}) = x_{i}$, if $i \neq r - 1$, and $\mu_{r-1}(x_{r-1}) = [x_{r-1},x_{r}]$.
\item For each integer $r \geq 2$, let $\psi_{r+3}(x_{i}) = x_{i}$, if $i \neq r + 3$, and $\psi_{r+3}(x_{r+3}) = [x_{r},x_{r+1}]$.
\item For each integer $r \geq 4$, let $\rho_{r+1}(x_{i}) = x_{i}$, if $i \neq r + 1$, and $\psi_{r+1}(x_{r+1}) = [x_{2},x_{3}]$.
\end{enumerate}

The next Lemma is the immediate consequence of the well known identities $[ab,c] = a[b,c] + [a,c]b$ and $[a,bc] = [a,b]c + b[a,c]$, where $a,b,c \in K\langle X \rangle$.

\begin{lemma}
The following identity holds
\begin{center}
$[h_{1}h_{2},h_{3}h_{4}] = h_{1}h_{3}[h_{2},h_{4}] + h_{1}[h_{2},h_{3}]h_{4} + h_{3}[h_{1},h_{4}]h_{2} + [h_{1},h_{3}]h_{4}h_{2}$, where $h_{1},h_{2},h_{3},h_{4} \in K\langle X \rangle$.
\end{center}
\end{lemma}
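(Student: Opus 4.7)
The plan is to prove this identity by a direct double application of the two Leibniz-type rules $[ab,c] = a[b,c] + [a,c]b$ and $[a,bc] = [a,b]c + b[a,c]$ cited immediately before the statement. First I would regard $h_{1}h_{2}$ as a product and apply the first rule with $a = h_{1}$, $b = h_{2}$, $c = h_{3}h_{4}$ to split
\[ [h_{1}h_{2}, h_{3}h_{4}] = h_{1}[h_{2}, h_{3}h_{4}] + [h_{1}, h_{3}h_{4}]h_{2}. \]

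Next I would apply the second Leibniz rule to each of the two remaining commutators that still contain the product $h_{3}h_{4}$, obtaining $[h_{2}, h_{3}h_{4}] = [h_{2}, h_{3}]h_{4} + h_{3}[h_{2}, h_{4}]$ and $[h_{1}, h_{3}h_{4}] = [h_{1}, h_{3}]h_{4} + h_{3}[h_{1}, h_{4}]$. Substituting these expansions into the previous equality and distributing the outer factor $h_{1}$ on the left and $h_{2}$ on the right produces exactly the four summands $h_{1}h_{3}[h_{2}, h_{4}]$, $h_{1}[h_{2}, h_{3}]h_{4}$, $h_{3}[h_{1}, h_{4}]h_{2}$, and $[h_{1}, h_{3}]h_{4}h_{2}$, matching the right-hand side in the statement.

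There is no genuine obstacle in this argument: it is a two-step bookkeeping exercise inside the free associative algebra $K\langle X \rangle$, and nothing beyond associativity and the two derivation identities is used. The only point requiring care is tracking which of the four resulting terms sits to the right of an outer $h_{1}$ and which sits to the left of an outer $h_{2}$, so that the final ordering of factors in each summand agrees with the form written in the lemma.
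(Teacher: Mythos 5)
Your proof is correct and is exactly the argument the paper intends: the paper offers no written proof, merely asserting the lemma as an immediate consequence of the two Leibniz rules $[ab,c]=a[b,c]+[a,c]b$ and $[a,bc]=[a,b]c+b[a,c]$, and your two-step expansion carries out precisely that computation, producing the four summands of the stated identity.
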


\begin{corollary}\label{resultado1}
Let $h_{1},h_{2},h_{3}$ be monomials. Suppose that $\alpha(h_{1}) = \alpha(h_{2}) = \alpha(h_{3}) = \overline{0}$. The polynomial identity $[h_{1}h_{2},h_{3}]$ is consequence of polynomial identities
\begin{center}
$[h_{1},h_{3}]$, and $[h_{2},h_{3}]$.
\end{center}
\end{corollary}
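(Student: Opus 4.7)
The plan is to apply the Leibniz-type commutator expansion
$$[ab,c] = a[b,c] + [a,c]b$$
directly, with $a = h_{1}$, $b = h_{2}$, and $c = h_{3}$. This yields the identity
$$[h_{1}h_{2}, h_{3}] = h_{1}[h_{2},h_{3}] + [h_{1},h_{3}]h_{2}.$$
Alternatively, one may specialize the four-term formula of the preceding Lemma by taking $h_{4} = 1$, in which case two of the four summands collapse to zero and exactly the same decomposition falls out.

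Once this decomposition is in hand, the conclusion is immediate: both summands on the right-hand side are products of a monomial with one of the two hypothesized polynomials $[h_{1},h_{3}]$ or $[h_{2},h_{3}]$. Since the weak $T_{G}$-ideal generated by these two polynomials is in particular a two-sided ideal of $K\langle X \rangle$, it contains every expression of the form $u \cdot [h_{i},h_{3}] \cdot v$ with $u,v$ monomials. Therefore $[h_{1}h_{2}, h_{3}]$ itself lies in that weak $T_{G}$-ideal, which is exactly what it means for it to be a consequence of $[h_{1},h_{3}]$ and $[h_{2},h_{3}]$.

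The neutrality hypothesis $\alpha(h_{1}) = \alpha(h_{2}) = \alpha(h_{3}) = \overline{0}$ plays no role in the algebraic manipulation itself, since the Leibniz rule holds in any associative algebra. Its function is auxiliary: it guarantees that $[h_{1},h_{3}]$ and $[h_{2},h_{3}]$ are genuine graded polynomials in the neutral component, so that assuming them as identities is meaningful in the framework of the pair $(M_{n}(K), gl_{n}(K))$ (in particular, they are of type 1 in the sense of Definition \ref{identidadesessenciais}). No substantial obstacle arises; the entire argument reduces to a single application of the standard commutator rule.
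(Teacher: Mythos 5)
Your proof is correct and follows exactly the route the paper intends: the corollary is stated as an immediate consequence of the Leibniz-type identity $[ab,c]=a[b,c]+[a,c]b$, and your decomposition $[h_{1}h_{2},h_{3}]=h_{1}[h_{2},h_{3}]+[h_{1},h_{3}]h_{2}$ together with the observation that a weak $T_{G}$-ideal is in particular a two-sided ideal is precisely the intended argument. Your remark that the neutrality hypothesis only serves to make the generators well-formed identities of type~1 is also accurate.
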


\begin{lemma}\label{resultado4}
Let $h_{1},h_{2},h_{3},h_{4}$ be monomials such that $h_{1}h_{2}h_{3}h_{4}$ is multi-linear.
Suppose that $\alpha(h_{1}) = \alpha(h_{4}) = (\alpha(h_{2}))^{-1}$ and $\alpha(h_{3}) = \overline{0}$. Then $ h_{3}h_{4}h_{2}h_{1} - h_{1}h_{2}h_{3}h_{4}$ is consequence of the identity $h_{4}h_{2}h_{1} - h_{1}h_{2}h_{4}$ and identities of $J_{1}$.
\end{lemma}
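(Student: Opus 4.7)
The plan is to slide $h_{3}$ past the block $h_{1}h_{2}$ using a single type 1 identity, and then apply the given identity after $h_{3}$ has been moved out of the way. Setting $g = \alpha(h_{2})$, the hypothesis gives $\alpha(h_{1}h_{2}) = g^{-1}g = \overline{0} = \alpha(h_{3})$. Moreover, the product $(h_{1}h_{2})\,h_{3}$ is multi-linear, being a subword of the multi-linear monomial $h_{1}h_{2}h_{3}h_{4}$.

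The key observation is therefore that $[h_{1}h_{2},\,h_{3}]$ is literally an identity of type 1, being the commutator of two monomials of degree $\overline{0}$ whose product is multi-linear. In particular $[h_{1}h_{2},h_{3}] \in J_{1}$, and no appeal to Corollary \ref{resultado1} is needed (which would require each individual factor to have degree $\overline{0}$, something that fails here since $\alpha(h_{1}) = g^{-1}$ and $\alpha(h_{2}) = g$).

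Multiplying $[h_{1}h_{2},h_{3}]$ on the right by $h_{4}$ (permitted since $J_{1}$ is a two-sided ideal) yields
\[
h_{1}h_{2}h_{3}h_{4} - h_{3}h_{1}h_{2}h_{4} \in J_{1}.
\]
On the other hand, multiplying the given identity $h_{4}h_{2}h_{1} - h_{1}h_{2}h_{4}$ on the left by $h_{3}$ places
\[
h_{3}h_{4}h_{2}h_{1} - h_{3}h_{1}h_{2}h_{4}
\]
in the weak $T_{G}$-ideal generated by $h_{4}h_{2}h_{1} - h_{1}h_{2}h_{4}$. Subtracting the two relations gives exactly $h_{3}h_{4}h_{2}h_{1} - h_{1}h_{2}h_{3}h_{4}$ modulo $J_{1}$ together with the given identity, which is the claim.

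The only point that requires attention is verifying that weak $T_{G}$-ideals are closed under arbitrary left and right multiplication; this is immediate from the definition (they are two-sided ideals), and the multi-linearity of $h_{1}h_{2}h_{3}h_{4}$ ensures that inserting $h_{3}$ or $h_{4}$ as a factor creates no overlap of variables. There is no serious obstacle here: the entire argument reduces to recognizing that $[h_{1}h_{2},h_{3}]$ already has the form of a type 1 generator, after which the result is a two-line manipulation.
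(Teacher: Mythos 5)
Your proof is correct and follows essentially the same route as the paper: both hinge on observing that $\alpha(h_{1}h_{2}) = \overline{0}$ makes $[h_{1}h_{2},h_{3}]$ a type 1 generator of $J_{1}$, after which right-multiplication by $h_{4}$ and left-multiplication of the given identity by $h_{3}$ yield the claim. Your explicit remark that Corollary \ref{resultado1} is not needed (and would not apply, since $h_{1}$ and $h_{2}$ individually need not have degree $\overline{0}$) is accurate but does not change the argument.
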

\begin{proof}
Notice that $\alpha(h_{1}h_{2}) = \overline{0}$. Modulo $J_{1}$, we have $[h_{1}h_{2},h_{3}] \equiv 0$. Thus,
\begin{center}
$h_{3}h_{4}h_{2}h_{1} - h_{1}h_{2}h_{3}h_{4} \equiv h_{3}h_{4}h_{2}h_{1} - h_{3}h_{1}h_{2}h_{4} = h_{3}(h_{4}h_{2}h_{1} - h_{1}h_{2}h_{4})$.
\end{center}
\end{proof}

Following the ideas of the Lemma \ref{resultado4}, we can prove the next lemma.

\begin{lemma}\label{resultado4-parte2}
Let $h_{1},h_{2},h_{3},h_{4}$ be monomials such that $h_{1}h_{2}h_{3}h_{4}$ is multi-linear.
Suppose that $\alpha(h_{1}) = \alpha(h_{4}) = (\alpha(h_{2}))^{-1}$ and $\alpha(h_{3}) = \overline{0}$. Then $h_{1}h_{2}h_{3}h_{4} - h_{4}h_{2}h_{3}h_{1}$ is consequence of the identity $h_{1}h_{2}h_{4} - h_{4}h_{2}h_{1}$ and identities of $J_{1}$.
\end{lemma}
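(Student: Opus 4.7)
The plan is to mimic the argument of Lemma \ref{resultado4} almost verbatim, just with the roles of $h_3$ and the ``boundary'' factors slightly rearranged. The key observation is that both $h_{1}h_{2}$ and $h_{4}h_{2}$ are of $G$-degree $\overline{0}$: indeed $\alpha(h_{1}h_{2}) = \alpha(h_{1})\alpha(h_{2}) = \overline{0}$ by the hypothesis $\alpha(h_{1}) = (\alpha(h_{2}))^{-1}$, and likewise $\alpha(h_{4}h_{2}) = \overline{0}$ since $\alpha(h_{4}) = (\alpha(h_{2}))^{-1}$. Combined with $\alpha(h_{3}) = \overline{0}$, Corollary \ref{resultado1} (applied to $[h_{1}h_{2},h_{3}]$, which follows from the type 1 identities $[h_{1},h_{3}]$ and $[h_{2},h_{3}]$) shows that $[h_{1}h_{2},h_{3}] \in J_{1}$, and an identical argument gives $[h_{4}h_{2},h_{3}] \in J_{1}$.

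Next I would use these two commutator congruences to push $h_{3}$ to the left in each monomial: modulo $J_{1}$ we have $h_{1}h_{2}h_{3} \equiv h_{3}h_{1}h_{2}$ and $h_{4}h_{2}h_{3} \equiv h_{3}h_{4}h_{2}$. Multiplying on the right by $h_{4}$ and $h_{1}$ respectively, and subtracting, we obtain
\[
h_{1}h_{2}h_{3}h_{4} - h_{4}h_{2}h_{3}h_{1} \;\equiv\; h_{3}h_{1}h_{2}h_{4} - h_{3}h_{4}h_{2}h_{1} \;=\; h_{3}\bigl(h_{1}h_{2}h_{4} - h_{4}h_{2}h_{1}\bigr) \pmod{J_{1}}.
\]
The right-hand side is a left-multiple of the hypothesized identity $h_{1}h_{2}h_{4} - h_{4}h_{2}h_{1}$, and since a weak $T_{G}$-ideal is closed under multiplication by arbitrary elements of $K\langle X \rangle$, it lies in the weak $T_{G}$-ideal generated by $h_{1}h_{2}h_{4} - h_{4}h_{2}h_{1}$ together with $J_{1}$, as required.

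I expect no real obstacle here; the argument is direct and essentially the mirror image of the one used for Lemma \ref{resultado4}. The only minor point to verify is that both displacements of $h_{3}$ (past $h_{1}h_{2}$ on the left and past $h_{4}h_{2}$ on the left) are legitimate modulo $J_{1}$, which is exactly the neutral-degree content of the hypotheses on $\alpha(h_{1}),\alpha(h_{2}),\alpha(h_{4})$ noted above.
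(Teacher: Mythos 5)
Your argument is correct and is exactly the one the paper intends: the paper gives no separate proof for this lemma, stating only that it follows ``the ideas of Lemma \ref{resultado4}'', and your computation --- using $[h_{1}h_{2},h_{3}],[h_{4}h_{2},h_{3}]\in J_{1}$ to move $h_{3}$ to the left in both monomials and factor out $h_{3}(h_{1}h_{2}h_{4}-h_{4}h_{2}h_{1})$ --- is precisely that mirror-image argument. The only difference from the model proof is that here two commutations are needed instead of one, which you handle correctly.
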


From now on, the hat in the above a polynomial described by the capital letters $Y,V,W$ indicates that the variable $x_{r}$ will be omitted
The hat over $x_{r}$ indicates that this variable is omitted too.

%%% 20 de julho

\begin{definition}
Let $h_{1} = \widehat{x_{r}}\cdots x_{r+s},h_{2} = x_{r+s+1}.\cdots.x_{r+s+t}$, where $r,t \geq 1$, $s \geq 0$ and $\alpha(x_{r}) = \overline{0}$. Let us define the following polynomials:
\begin{center}
$Y_{r,h_{1},h_{2}}  = [(x_{1}.\cdots.x_{r})h_{1},h_{2}],$
\end{center}
For $r \geq 2$:
\begin{center} $Y_{1,(x_{1}.\cdots.x_{r-1})h_{1},h_{2}} = [(x_{r})(x_{1}.\cdots.x_{r-1})h_{1},h_{2}].$
\end{center}
For $r \geq 3$:
\begin{center}
 $Y_{r-q,(x_{r-q}.\cdots.x_{r-1})h_{1},h_{2}} = [(x_{1}.\cdots.x_{r-q-1}x_{r})(x_{r - q}.\cdots.x_{r-1})h_{1},h_{2}], \newline 1 \leq q \leq r - 2$,
\end{center}

\end{definition}

\begin{lemma}\label{pioneiro}
For all $r \geq 2$, $Y_{r,h_{1},h_{2}} = Y_{r-1,x_{r-1}h_{1},h_{2}} + \mu_{r-1}(\widehat{Y_{r,h_{1},h_{2}}})$.
\end{lemma}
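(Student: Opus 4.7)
The plan is a direct computation; once the notation is unfolded correctly, the identity collapses to the definition of the commutator $[x_{r-1},x_r]=x_{r-1}x_r-x_rx_{r-1}$.

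First I would write out all three terms with the definitions in hand. By the definition of $Y$, we have
\[ Y_{r,h_1,h_2}=[(x_1\cdots x_{r-2}\,x_{r-1}\,x_r)h_1,h_2], \]
and for the case $q=1$ in the definition,
\[ Y_{r-1,x_{r-1}h_1,h_2}=[(x_1\cdots x_{r-2}\,x_r)(x_{r-1})h_1,h_2]. \]
Next, using the convention that the hat over $x_r$ in $\widehat{Y_{r,h_1,h_2}}$ means that variable is dropped, we get
\[ \widehat{Y_{r,h_1,h_2}}=[(x_1\cdots x_{r-2}\,x_{r-1})h_1,h_2]. \]
Applying $\mu_{r-1}$, which replaces $x_{r-1}$ by $[x_{r-1},x_r]$ and fixes every other variable, yields
\[ \mu_{r-1}(\widehat{Y_{r,h_1,h_2}})=\bigl[(x_1\cdots x_{r-2}\,[x_{r-1},x_r])h_1,\,h_2\bigr]. \]

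Then I would expand the inner commutator and use bilinearity of the outer bracket in its first argument:
\[ \mu_{r-1}(\widehat{Y_{r,h_1,h_2}})=[(x_1\cdots x_{r-2}\,x_{r-1}\,x_r)h_1,h_2]-[(x_1\cdots x_{r-2}\,x_r\,x_{r-1})h_1,h_2], \]
which is precisely $Y_{r,h_1,h_2}-Y_{r-1,x_{r-1}h_1,h_2}$. Rearranging gives the claimed equality.

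There is no real obstacle here: the statement is a bookkeeping identity whose entire content is that commutators are additive in each slot and that $\mu_{r-1}$ acts as a substitution. The only thing to be careful about is making sure one reads the subscripts of $Y$ correctly (the case $q=1$ of the definition supplies the second monomial), and that the hat convention in $\widehat{Y_{r,h_1,h_2}}$ is applied only to the variable $x_r$ so that $\mu_{r-1}$ can still act on $x_{r-1}$ in the reduced expression.
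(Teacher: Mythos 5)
Your proof is correct and is essentially the same computation as the paper's: both reduce the identity to $x_{r-1}x_r=[x_{r-1},x_r]+x_rx_{r-1}$ inside the first slot of the outer commutator, with the first resulting summand recognized as $\mu_{r-1}(\widehat{Y_{r,h_1,h_2}})$ and the second as the $q=1$ instance $Y_{r-1,x_{r-1}h_1,h_2}$. Your unfolding of the hat convention and of the definition of $Y_{r-1,x_{r-1}h_1,h_2}$ matches the paper, and your single formula uniformly covers the $r=2$ and $r\geq 3$ cases that the paper treats separately.
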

\begin{proof}
If $r \geq 3$, notice that
\begin{center}
$[x_{1}.\cdots.x_{r}.h_{1},h_{2}] = [x_{1}.\cdots.[x_{r-1},x_{r}]h_{1},h_{2}] +
[x_{1}.\cdots.x_{r}x_{r-1}h_{1},h_{2}]$.
\end{center}
We have already $r = 2$
\begin{center}
$[x_{1}.x_{2}.h_{1},h_{2}] = [[x_{1},x_{2}]h_{1},h_{2}] +
[x_{2}x_{1}h_{1},h_{2}] $.
\end{center}

In the two situations, the first summand on the right is $\mu_{r-1}(\widehat{Y_{r,h_{1},h_{2}}})$. The second is $Y_{r-1,x_{r-1}h_{1},h_{2}}$.
\end{proof}

\begin{corollary}\label{resultado2}
For all $r \geq 2$, $Y_{r,h_{1},h_{2}}$ is equal to
\begin{center}
$\mu_{r-1}(\widehat{Y_{r,h_{1},h_{2}}}) + \ldots + \mu_{1}(\widehat{Y_{2,x_{2}.\cdots.x_{r-1}h_{1},h_{2}}}) + Y_{1,x_{1}.\cdots.x_{r-1}h_{1},h_{2}}$.
\end{center}
\end{corollary}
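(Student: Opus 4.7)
The statement is precisely the telescoped form of the one-step recursion in Lemma \ref{pioneiro}, so my plan is to prove it by induction on $r \geq 2$.

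For the base case $r = 2$, Lemma \ref{pioneiro} gives
\[
Y_{2,h_{1},h_{2}} = Y_{1,x_{1}h_{1},h_{2}} + \mu_{1}(\widehat{Y_{2,h_{1},h_{2}}}),
\]
which is exactly the right-hand side of the formula when $r = 2$ (the sum collapses to its first and last summands).

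For the inductive step, fix $r \geq 3$ and assume the formula holds at level $r-1$ for every choice of monomial $h_1'$ and $h_2$. Apply Lemma \ref{pioneiro} once to the left-hand side:
\[
Y_{r,h_{1},h_{2}} = Y_{r-1,x_{r-1}h_{1},h_{2}} + \mu_{r-1}(\widehat{Y_{r,h_{1},h_{2}}}).
\]
Then apply the inductive hypothesis to the first summand $Y_{r-1,x_{r-1}h_{1},h_{2}}$, taking the role of $h_1$ in the hypothesis to be $x_{r-1}h_1$. This expands it as
\[
\mu_{r-2}(\widehat{Y_{r-1,x_{r-1}h_{1},h_{2}}}) + \ldots + \mu_{1}(\widehat{Y_{2,x_{2}\cdots x_{r-2}x_{r-1}h_{1},h_{2}}}) + Y_{1,x_{1}\cdots x_{r-2}x_{r-1}h_{1},h_{2}}.
\]
Adding the outer term $\mu_{r-1}(\widehat{Y_{r,h_{1},h_{2}}})$ reconstructs the full sum in the statement.

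The only genuine point that deserves care is the bookkeeping: one has to check that when the inductive hypothesis is applied with $h_1$ replaced by $x_{r-1}h_1$, the indices inside each $\mu_k(\widehat{Y_{k+1,\cdot,h_{2}}})$ assemble correctly, so that the strings $x_k \cdots x_{r-2} x_{r-1} h_{1}$ appearing in the lower-level terms match the strings $x_k \cdots x_{r-1} h_{1}$ demanded by the statement at level $r$. This is just a matter of concatenation, and no algebraic identity is required beyond Lemma \ref{pioneiro} itself; hence the corollary follows by induction.
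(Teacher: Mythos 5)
Your proof is correct and is exactly the argument the paper intends: the corollary is stated without proof precisely because it is the telescoped iteration of Lemma \ref{pioneiro}, which is what your induction on $r$ carries out. The bookkeeping point you flag (that the strings $x_{k}\cdots x_{r-1}h_{1}$ concatenate consistently across levels) is the only thing to check, and it works out as you say.
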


\begin{definition}

Let $h_{1} = \widehat{x_{r}}.\cdots.x_{r+s}, h_{2} = x_{r+s+1}.\cdots.x_{r+s+t},\newline h_{3} = (x_{r+s+t+1}.\cdots.x_{r+s+t+u}),$ where $r,t,u \geq 1, s \geq 0$, and $\alpha(x_{r}) = \overline{0}$. Let us define
\begin{center}
$V_{r,h_{1},h_{2},h_{3}} = (x_{1}.\cdots.x_{r}.h_{1})h_{2}h_{3} - h_{3}h_{2}(x_{1}.\cdots.x_{r}.h_{1})$.
\end{center}

For $r \geq 2$:
\begin{center}
$V_{1,(x_{1}.\cdots.x_{r-1}h_{1},h_{2},h_{3})} = (x_{r})(x_{1}.\cdots.x_{r-1}h_{1})h_{2}h_{3} - h_{3}h_{2}(x_{r})(x_{1}.\cdots.x_{r-1}h_{1})$
\end{center}

For $r \geq 3$:
\begin{center}
$V_{r-q,(x_{r-q}.\cdots.x_{r-1})h_{1},h_{2},h_{3}} = (x_{1}.\cdots.x_{r-q-1}.x_{r})(x_{r-q}.\cdots.x_{r-1}h_{1})h_{2}h_{3} - \newline h_{3}h_{2}(x_{1}.\cdots.x_{r-q-1}.x_{r})(x_{r-q}.\cdots.x_{r-1}h_{1}), 1 \leq q \leq r - 2$.
\end{center}

\end{definition}

\begin{lemma}
For all $r \geq 2$, $V_{r,h_{1},h_{2},h_{3}} = V_{r-1,x_{r-1}h_{1},h_{2},h_{3}} + \mu_{r-1}(\widehat{V_{r,h_{1},h_{2},h_{3}}})$.
\end{lemma}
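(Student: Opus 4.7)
The plan is to mirror the proof of Lemma~\ref{pioneiro} exactly, applying a single commutator splitting to both summands of $V_{r,h_{1},h_{2},h_{3}}$ simultaneously. The key algebraic move is the identity
\[
x_{r-1}x_{r} \;=\; [x_{r-1},x_{r}] + x_{r}x_{r-1},
\]
which I would substitute inside the factor $x_{1}\cdots x_{r}$ appearing in each of the two terms defining $V_{r,h_{1},h_{2},h_{3}}$.

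For $r \geq 3$, after substituting this into
\[
V_{r,h_{1},h_{2},h_{3}} = (x_{1}\cdots x_{r}h_{1})h_{2}h_{3} - h_{3}h_{2}(x_{1}\cdots x_{r}h_{1}),
\]
the polynomial splits into a ``$[x_{r-1},x_{r}]$-piece'', built from $(x_{1}\cdots x_{r-2})[x_{r-1},x_{r}]h_{1}$, and an ``$x_{r}x_{r-1}$-piece'', built from $(x_{1}\cdots x_{r-2}x_{r}x_{r-1})h_{1}$. The first piece is exactly $\mu_{r-1}(\widehat{V_{r,h_{1},h_{2},h_{3}}})$: erasing $x_{r}$ from $V_{r,h_{1},h_{2},h_{3}}$ yields $(x_{1}\cdots x_{r-1}h_{1})h_{2}h_{3}-h_{3}h_{2}(x_{1}\cdots x_{r-1}h_{1})$, and then $\mu_{r-1}$ replaces $x_{r-1}$ by $[x_{r-1},x_{r}]$. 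The second piece, read off the $q=1$ clause of the definition of $V$, is precisely $V_{r-1,x_{r-1}h_{1},h_{2},h_{3}}$. Adding the two gives the claimed identity.

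The case $r = 2$ is handled by the same splitting $x_{1}x_{2} = [x_{1},x_{2}] + x_{2}x_{1}$, where now the ``$x_{2}x_{1}$-piece'' matches $V_{1,x_{1}h_{1},h_{2},h_{3}}$ via the ``$r \geq 2$'' clause of the definition. I do not anticipate a genuine obstacle: the argument is a purely syntactic commutator manipulation parallel to Lemma~\ref{pioneiro}. The only point requiring some care is that, because $V$ is a difference of two products rather than a single outer commutator, the splitting must be inserted into both occurrences of $x_{1}\cdots x_{r}h_{1}$; happily, both insertions contribute symmetrically and the $[x_{r-1},x_{r}]$ parts reassemble into a single application of $\mu_{r-1}$.
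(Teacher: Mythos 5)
Your proposal is correct and is exactly the argument the paper intends: the paper's proof of this lemma simply states that it is analogous to the proof of Lemma~\ref{pioneiro}, and your splitting $x_{r-1}x_{r} = [x_{r-1},x_{r}] + x_{r}x_{r-1}$ applied inside both summands of $V_{r,h_{1},h_{2},h_{3}}$, with the two cases $r\geq 3$ and $r=2$ matched against the corresponding clauses of the definition of $V$, is precisely that analogy spelled out.
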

\begin{proof}
The proof is analogous to the proof of Lemma \ref{pioneiro}.

%%First of all, notice that

%%begin{center}
%%$(x_{1}.\cdots.x_{r}.h_{1})h_{2}h_{3} - h_{3}h_{2}(x_{1}.\cdots.x_{r}.h_{1}) =
 %%(x_{1}.\cdots.[x_{r-1},x_{r}]h_{1})h_{2}h_{3}- h_{3}h_{2}(x_{1}.\cdots.[x_{r-1},x_{r}]h_{1}) +
 %%(x_{1}.\cdots.x_{r}.x_{r-1}h_{1})h_{2}h_{3} - h_{3}h_{2}(x_{1}.\cdots.x_{r}.x_{r-1}h_{1})$.
%%\end{center}
%%The sum of the first two summands is $\mu_{r-1}(\widehat{V_{r,h_{1},h_{2},h_{3}}})$. The last sum is \newline $V_{r-1,x_{r-1}h_{1},h_{2},h_{3}}$.

\end{proof}

\begin{corollary}\label{resultado2.1}
For all $r \geq 2$, $V_{r,h_{1},h_{2},h_{3}}$ is equal to
\begin{center}
$\mu_{r-1}(\widehat{V_{r,h_{1},h_{2},h_{3}}}) + \ldots + \mu_{1}(\widehat{V_{2,x_{2}.\cdots.x_{r-1}h_{1},h_{2},h_{3}}}) + V_{1,x_{1}.\cdots.x_{r-1}h_{1},h_{2},h_{3}}$.
\end{center}
\end{corollary}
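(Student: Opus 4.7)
The plan is to apply the preceding lemma iteratively and collect the telescoping summands, in exact parallel to the way Corollary \ref{resultado2} is obtained from Lemma \ref{pioneiro}. Concretely, I would argue by induction on $r$.

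For the base case $r = 2$, the preceding lemma gives
\[
V_{2,h_{1},h_{2},h_{3}} = V_{1,x_{1}h_{1},h_{2},h_{3}} + \mu_{1}(\widehat{V_{2,h_{1},h_{2},h_{3}}}),
\]
which is exactly the claimed expression. For the inductive step, assume the statement for $r-1 \geq 2$. Applying the preceding lemma once to $V_{r,h_{1},h_{2},h_{3}}$ produces
\[
V_{r,h_{1},h_{2},h_{3}} = V_{r-1,x_{r-1}h_{1},h_{2},h_{3}} + \mu_{r-1}(\widehat{V_{r,h_{1},h_{2},h_{3}}}),
\]
and then the induction hypothesis applied to the $V_{r-1,x_{r-1}h_{1},h_{2},h_{3}}$ summand expands it as $\mu_{r-2}(\widehat{V_{r-1,x_{r-1}h_{1},h_{2},h_{3}}}) + \cdots + \mu_{1}(\widehat{V_{2,x_{2}.\cdots.x_{r-1}h_{1},h_{2},h_{3}}}) + V_{1,x_{1}.\cdots.x_{r-1}h_{1},h_{2},h_{3}}$. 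Adding back the first $\mu_{r-1}$ term yields the desired telescoping sum.

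The only real obstacle is bookkeeping: the family of symbols $V_{r-q,(x_{r-q}.\cdots.x_{r-1})h_{1},h_{2},h_{3}}$ is defined in a slightly asymmetric way, with the variable $x_{r}$ pulled out of the prefix and the indices $x_{r-q},\ldots,x_{r-1}$ absorbed into the second argument. So at each stage of the recursion one must check that the symbol produced by unfolding $V_{r-1,x_{r-1}h_{1},h_{2},h_{3}}$ via the lemma really coincides, after the relabeling $h_{1} \mapsto x_{r-1}h_{1}$, with the symbol $V_{r-q,(x_{r-q}.\cdots.x_{r-1})h_{1},h_{2},h_{3}}$ appearing in the corollary statement. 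This verification is routine once the definition is unwound; with it in hand, the corollary reduces to the same telescoping argument already used for Corollary \ref{resultado2}, and no further input is needed.
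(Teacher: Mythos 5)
Your proof is correct and matches the paper's intended derivation: the corollary is obtained exactly by iterating the recursion of the preceding lemma and telescoping, just as Corollary \ref{resultado2} follows from Lemma \ref{pioneiro}. No further comment is needed.
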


\begin{definition}

Let $h_{1} = x_{r+s}.\cdots.\widehat{x_{r}}, h_{2} = x_{r+s+1}.\cdots.x_{r+s+t},\newline h_{3} = (x_{r+s+t+1}.\cdots.x_{r+s+t+u}),$ where $r,t,u \geq 1, s \geq 0$, and $\alpha(x_{r}) = \overline{0}$. Let us define
\begin{center}
$W_{r,h_{1},h_{2},h_{3}} = h_{2}(h_{1}.x_{r}.\cdots.x_{1})h_{3} -  h_{3}(h_{1}.x_{r}.\cdots.x_{1})h_{2}$.
\end{center}

For $r \geq 2$:
\begin{center}
$W_{1,h_{1}x_{r-1}.\cdots.x_{1},h_{2},h_{3}} =  h_{2}(h_{1}.x_{r-1}.\cdots.x_{1})(x_{r})h_{3} - h_{3}(h_{1}.x_{r-1}.\cdots.x_{1})(x_{r})h_{2}$
\end{center}

For $r \geq 3$:
\begin{center}
$W_{r-q,h_{1}x_{r-1}.\cdots.x_{r - q - 1},h_{2},h_{3}} = h_{2}(h_{1}.x_{r-1}.\cdots.x_{r - q})(x_{r}.x_{r-q-1}.\cdots.x_{1})h_{3} - \newline h_{3}(h_{1}.x_{r-1}.\cdots.x_{r - q})(x_{r}.x_{r-q-1}.\cdots.x_{1})h_{2}, 1 \leq q \leq r - 2$.
\end{center}
\end{definition}

\begin{lemma}
For all $r \geq 2$, $W_{r,h_{1},h_{2},h_{3}} = W_{r-1,h_{1}x_{r-1},h_{2},h_{3}} - \mu_{r-1}(\widehat{W_{r,h_{1},h_{2},h_{3}}})$.

\end{lemma}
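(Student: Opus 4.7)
The plan is to mimic the argument of Lemma \ref{pioneiro}, applying the elementary rearrangement $x_{r}x_{r-1} = x_{r-1}x_{r} + [x_{r},x_{r-1}]$ inside the word $h_{1}x_{r}x_{r-1}\cdots x_{1}$ that appears in both summands of $W_{r,h_{1},h_{2},h_{3}}$. After this single substitution the expression splits into two parts: one in which $x_{r}$ and $x_{r-1}$ have been transposed, and one in which they have been replaced by their commutator.

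Collecting the transposed parts yields
\begin{center}
$h_{2}(h_{1}x_{r-1})(x_{r}x_{r-2}\cdots x_{1})h_{3} - h_{3}(h_{1}x_{r-1})(x_{r}x_{r-2}\cdots x_{1})h_{2}$,
\end{center}
which is exactly $W_{r-1,h_{1}x_{r-1},h_{2},h_{3}}$ (the $q=1$ instance of the general definition). The remaining two terms are
\begin{center}
$h_{2}(h_{1}[x_{r},x_{r-1}]x_{r-2}\cdots x_{1})h_{3} - h_{3}(h_{1}[x_{r},x_{r-1}]x_{r-2}\cdots x_{1})h_{2}$.
\end{center}
Using $[x_{r},x_{r-1}] = -[x_{r-1},x_{r}]$, and recalling that $\widehat{W_{r,h_{1},h_{2},h_{3}}}$ is obtained from $W_{r,h_{1},h_{2},h_{3}}$ by erasing $x_{r}$, while $\mu_{r-1}$ turns the surviving $x_{r-1}$ into $[x_{r-1},x_{r}]$, I would identify this second contribution as $-\mu_{r-1}(\widehat{W_{r,h_{1},h_{2},h_{3}}})$. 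Adding the two pieces reproduces $W_{r,h_{1},h_{2},h_{3}}$.

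I do not anticipate any serious obstacle. The antisymmetry $[x_{r},x_{r-1}] = -[x_{r-1},x_{r}]$ is precisely what flips the plus sign of the recursion of Lemma \ref{pioneiro} into the minus sign appearing in the present statement, so it is the only sign bookkeeping that requires attention. The boundary case $r=2$ also works: the tail $x_{r-2}\cdots x_{1}$ is empty, and the identity reduces to the single relation $x_{2}x_{1} = x_{1}x_{2} + [x_{2},x_{1}]$, which gives the claim after substitution into the definitions of $W_{2,h_{1},h_{2},h_{3}}$, $W_{1,h_{1}x_{1},h_{2},h_{3}}$, and $\mu_{1}(\widehat{W_{2,h_{1},h_{2},h_{3}}})$.
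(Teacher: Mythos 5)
Your proposal is correct and follows exactly the route the paper intends: the paper's own proof simply says it is analogous to Lemma \ref{pioneiro}, i.e.\ apply the substitution $x_{r}x_{r-1}=x_{r-1}x_{r}+[x_{r},x_{r-1}]=x_{r-1}x_{r}-[x_{r-1},x_{r}]$ inside the middle word, and your sign bookkeeping (the antisymmetry of the bracket producing the minus sign absent from Lemma \ref{pioneiro}) and your treatment of the boundary case $r=2$ are both accurate.
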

\begin{proof}
The proof is analogous to the proof of Lemma \ref{pioneiro}.

%It is clear that

%\begin{center}
%$h_{2}(h_{1}.x_{r}.\cdots.x_{1})h_{3} -  h_{3}(h_{1}.x_{r}.\cdots.x_{1})h_{2} = h_{2}(h_{1}.[x_{r},x_{r-1}].\cdots.x_{1})h_{3} - h_{3}(h_{1}.[x_{r},x_{r-1}].\cdots.x_{1})h_{2} +  h_{2}(h_{1}.x_{r-1}.x_{r}.\cdots.x_{1})h_{3} -  h_{3}(h_{1}.x_{r-1}x_{r}.\cdots.x_{1})h_{2}$.
%\end{center}
%The sum of the two first two summands on the right is $-\mu_{r-1}(\widehat{W_{r,h_{1},h_{2},h_{3}}})$. The last sum is $W_{r-1,h_{1}x_{r-1},h_{2},h_{3}}$.

\end{proof}

\begin{corollary}\label{resultado2.2}
For all $r \geq 2$, $W_{r,h_{1},h_{2},h_{3}}$ is equal to
\begin{center}
$- \mu_{r-1}(\widehat{W_{r,h_{1},h_{2},h_{3}}}) - \ldots - \mu_{1}(\widehat{W_{2,h_{1}x_{r-1}.\cdots.x_{2},h_{2},h_{3}}}) + W_{1,h_{1}x_{r-1}.\cdots.x_{2}x_{1},h_{2},h_{3}}$.
\end{center}
\end{corollary}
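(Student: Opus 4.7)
The plan is to deduce Corollary \ref{resultado2.2} from the immediately preceding lemma by a straightforward induction on $r$, in complete parallel with the proofs of Corollaries \ref{resultado2} and \ref{resultado2.1}. The lemma gives the one-step recursion
\[ W_{r,h_1,h_2,h_3} = W_{r-1, h_1 x_{r-1}, h_2, h_3} - \mu_{r-1}(\widehat{W_{r,h_1,h_2,h_3}}), \]
which iterates cleanly: at each step the ``moving'' variable $x_{r-1}$ is absorbed into the first monomial slot and a single $-\mu_{j}$ correction is produced.

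For the base case $r = 2$, the formula in the corollary reduces to
\[ W_{2,h_1,h_2,h_3} = -\mu_1(\widehat{W_{2,h_1,h_2,h_3}}) + W_{1, h_1 x_1, h_2, h_3}, \]
which is precisely the lemma. For the inductive step I would apply the lemma once to $W_{r,h_1,h_2,h_3}$, then invoke the inductive hypothesis on $W_{r-1, h_1 x_{r-1}, h_2, h_3}$ (treating $h_1 x_{r-1}$ as the new first monomial argument and running the corollary with $r-1$ in place of $r$). Substituting the resulting telescoping expansion back into the output of the lemma yields the claimed identity.

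The only real content is bookkeeping of indices. After $q$ iterations one obtains $W_{r-q,\, h_1 x_{r-1}\cdots x_{r-q},\, h_2,\, h_3}$ plus corrections $-\mu_{r-1}(\widehat{W_{r,\ldots}}), -\mu_{r-2}(\widehat{W_{r-1,\,h_1 x_{r-1},\ldots}}), \ldots, -\mu_{r-q}(\widehat{W_{r-q+1,\,h_1 x_{r-1}\cdots x_{r-q+1},\ldots}})$, each with the hat placed on the $W$ whose subscripts encode exactly the state of the first monomial argument just before that step of the recursion. After $r-1$ iterations the leading $W$ term becomes $W_{1,\, h_1 x_{r-1}\cdots x_1,\, h_2,\, h_3}$, and the accumulated correction sums to $-\sum_{j=1}^{r-1}\mu_j(\widehat{\,\cdot\,})$, with the hats and $W$-subscripts matching the statement verbatim.

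No genuine obstacle arises: every application of the lemma contributes exactly one minus sign, so there is no sign alternation to track, and no cancellation is required. The entire argument is mechanical once one confirms that the ``$h_1$'' argument grows by one variable per step and that the indexing convention on $\mu_j$ is the one dictated by which variable is being commuted at each stage.
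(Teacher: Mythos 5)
Your proposal is correct and is exactly the argument the paper intends: the corollary is the telescoping iteration of the recursion $W_{r,h_{1},h_{2},h_{3}} = W_{r-1,h_{1}x_{r-1},h_{2},h_{3}} - \mu_{r-1}(\widehat{W_{r,h_{1},h_{2},h_{3}}})$ from the preceding lemma, with base case $r=2$ being the lemma itself. Your bookkeeping of the growing first argument $h_{1}x_{r-1}\cdots$, the uniform minus signs on the $\mu_{j}$ corrections, and the final term $W_{1,h_{1}x_{r-1}\cdots x_{2}x_{1},h_{2},h_{3}}$ all match the statement.
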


%%%%%%%%%%%%%%%%%%%%%%%%%%%%%%%%%%%%%%%%%%%%%%%%%%%%%%%%%%%%%%%%%%%%%%%%%%%%%%PARTE3
%%%%%%%%%%%%%%%%%%%%%%%%%%%%%%%%%%%%%%%%%%%%%%%%%%%%%%%%%%%%%%%%%%%%%%%%%%%%%%
%%%%%%%%%%%%%%%%%%%%%%%%%%%%%%%%%%%%%%%%%%%%%%%%%%%%%%%%%%%%%%%%%%%%%%%%%%%%%%
%%%%%%%%%%%%%%%%%%%%%%%%%%%%%%%%%%%%%%%%%%%%%%%%%%%%%%%%%%%%%%%%%%%%%%%%%%%%%%

The next lemma can be easily verified.

\begin{lemma}\label{resultado3-auxiliar}
Let $(a_{1},a_{2},a_{3}) \in (\mathbb{Z}_{3}^{*})^{3}$. If $a_{1}+ a_{2}, a_{1}+a_{2}+a_{3} \neq \overline{0}$, then $a_{1} + a_{3} = a_{2} + a_{3} = \overline{0}$.
Analogously, if $a_{3} + a_{2}, a_{3} + a_{2} + a_{1} \neq \overline{0}$, then $a_{1} + a_{3} = a_{1} + a_{2} = \overline{0}$.
\end{lemma}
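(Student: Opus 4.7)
The plan is straightforward case analysis inside $\mathbb{Z}_{3}$. The key arithmetic fact I would isolate at the outset is that for $a,b \in \mathbb{Z}_{3}^{*}=\{\overline{1},\overline{2}\}$, the sum $a+b$ equals $\overline{0}$ precisely when $a\neq b$, and equals the remaining nonzero element $-a=-b$ precisely when $a=b$. This immediately converts all the hypotheses and conclusions into equalities/inequalities among the $a_{i}$'s.

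Using this, I would first handle the forward implication. The hypothesis $a_{1}+a_{2}\neq\overline{0}$ forces $a_{1}=a_{2}$, so $a_{1}+a_{2}=-a_{1}$. Then $a_{1}+a_{2}+a_{3}=-a_{1}+a_{3}\neq\overline{0}$ forces $a_{3}\neq a_{1}$. Since $a_{1},a_{3}\in\{\overline{1},\overline{2}\}$ and they differ, $a_{1}+a_{3}=\overline{0}$. Because $a_{2}=a_{1}$, the same conclusion gives $a_{2}+a_{3}=\overline{0}$, completing the first half.

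The second (\emph{analogously}) statement is obtained by relabeling $(a_{1},a_{2},a_{3})\mapsto(a_{3},a_{2},a_{1})$: the hypotheses $a_{3}+a_{2}\neq\overline{0}$ and $a_{3}+a_{2}+a_{1}\neq\overline{0}$ become exactly the hypotheses of the first part after this swap, and the conclusions $a_{3}+a_{1}=a_{2}+a_{1}=\overline{0}$ match what is claimed (using commutativity of addition in $\mathbb{Z}_{3}$). So no new work is needed.

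There is no real obstacle here; the lemma is elementary arithmetic in a group of order $3$, and the whole argument fits in a few lines of case analysis. The only thing worth writing carefully is the observation that the two-term sum $a_{1}+a_{2}$ being nonzero already rigidifies $a_{1}=a_{2}$, since that single step drives the rest of the argument.
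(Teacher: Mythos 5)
Your proof is correct; the paper offers no argument at all for this lemma (it is merely asserted to be ``easily verified''), and your case analysis based on the observation that for $a,b\in\{\overline{1},\overline{2}\}$ one has $a+b=\overline{0}$ exactly when $a\neq b$ is precisely the elementary verification intended. The reduction of the second claim to the first via the relabeling $(a_{1},a_{2},a_{3})\mapsto(a_{3},a_{2},a_{1})$ is also sound.
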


\begin{lemma}\label{resultado3}
Let $r \geq 2$ be an integer. Let $h_{1} = x_{1}.\cdots.x_{r}x_{r+1}x_{r+2}$ be a monomial such that

\begin{center}
  $\alpha(x_{r}),\alpha(x_{r+1}),\alpha(x_{r+2}), \alpha(x_{r+1}x_{r+2}), \alpha(x_{r}x_{r+1}x_{r+2}) \neq \overline{0}$.
\end{center}

Let
\begin{center}
  $h_{2} = x_{1}.\cdots.x_{r-1}x_{r+3}x_{r+2}$, where $\alpha(x_{r+3}) = \overline{0}$,
  $h_{3} = x_{1}.\cdots.x_{r+1}x_{r}x_{r+2}$.
\end{center}

So $\alpha(x_{r}) + \alpha(x_{r+1}) = \alpha(x_{r}) + \alpha(x_{r+2}) = \overline{0}$. Furthermore, $h_{1} = \psi_{r+3}(h_{2}) + h_{3}$.
\end{lemma}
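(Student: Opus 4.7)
The statement has two independent claims, and each is essentially a direct check.

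For the first claim, the plan is to invoke the second half of Lemma \ref{resultado3-auxiliar} with $(a_{1},a_{2},a_{3}) = (\alpha(x_{r}),\alpha(x_{r+1}),\alpha(x_{r+2}))$. The hypotheses of the lemma require $a_{3}+a_{2} \neq \overline{0}$ and $a_{3}+a_{2}+a_{1} \neq \overline{0}$; these are precisely the assumptions $\alpha(x_{r+1}x_{r+2}) \neq \overline{0}$ and $\alpha(x_{r}x_{r+1}x_{r+2}) \neq \overline{0}$ (the group being abelian). The lemma then yields $a_{1}+a_{2}=a_{1}+a_{3}=\overline{0}$, which is exactly $\alpha(x_{r})+\alpha(x_{r+1})=\alpha(x_{r})+\alpha(x_{r+2})=\overline{0}$.

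For the second claim, the plan is to unpack the definition of the endomorphism $\psi_{r+3}$. Since $\psi_{r+3}$ fixes every variable other than $x_{r+3}$ and sends $x_{r+3}$ to $[x_{r},x_{r+1}]$, applying it to $h_{2}=x_{1}\cdots x_{r-1}x_{r+3}x_{r+2}$ gives
\[
\psi_{r+3}(h_{2}) = x_{1}\cdots x_{r-1}\,[x_{r},x_{r+1}]\,x_{r+2}.
\]
Expanding the commutator produces
\[
x_{1}\cdots x_{r-1}x_{r}x_{r+1}x_{r+2} \;-\; x_{1}\cdots x_{r-1}x_{r+1}x_{r}x_{r+2} \;=\; h_{1}-h_{3},
\]
so $h_{1} = \psi_{r+3}(h_{2}) + h_{3}$, as required.

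There is no serious obstacle here: the first part is a bookkeeping application of the previously proved combinatorial lemma in $\mathbb{Z}_{3}$, and the second part is a one-line expansion of the commutator. The only thing to be careful about is matching the indices in Lemma \ref{resultado3-auxiliar} to the right triple so that the correct conclusion (namely $a_{1}+a_{2}=a_{1}+a_{3}=\overline{0}$, rather than the symmetric version) is obtained.
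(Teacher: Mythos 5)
Your proposal is correct and follows essentially the same route as the paper: the first claim is obtained by applying the second half of Lemma \ref{resultado3-auxiliar} to the triple $(\alpha(x_{r}),\alpha(x_{r+1}),\alpha(x_{r+2}))$, and the second by writing $h_{1}=x_{1}\cdots x_{r-1}[x_{r},x_{r+1}]x_{r+2}+h_{3}$ and identifying the first summand with $\psi_{r+3}(h_{2})$. Your version is slightly more explicit about which hypotheses of the auxiliary lemma are being matched, which is a welcome clarification but not a different argument.
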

\begin{proof}
According to the Lemma \ref{resultado3-auxiliar}, we have $\alpha(x_{r}) + \alpha(x_{r+2}) = \alpha(x_{r}) + \alpha(x_{r+1}) = \overline{0}$.

Notice that
\begin{center}
$h_{1} =  x_{1}.\cdots.[x_{r},x_{r+1}]x_{r+2} + h_{3}$.
\end{center}

The first summand above is $\psi_{r+3}(h_{2})$.

\end{proof}

Following the proof of Lemma \ref{resultado3}, we can prove the following.

\begin{lemma}\label{resultado5}
Let $r \geq 4$ be an integer. Let $h_{1} = x_{1}.x_{2}.x_{3}.\cdots.x_{r}$ be a monomial such that

\begin{center}
  $\alpha(x_{1}),\alpha(x_{2}),\alpha(x_{3}) , \alpha(x_{1}x_{2}), \alpha(x_{1}x_{2}x_{3}) \neq \overline{0}$.
\end{center}

Let
\begin{center}
  $h_{2} = x_{1}.x_{r+1}.x_{4}.\cdots.x_{r}$, where $\alpha(x_{r+1}) = \overline{0}$,
  $h_{3} = x_{1}.x_{3}.x_{2}.\cdots.x_{r}$.
\end{center}

So $\alpha(x_{2}) + \alpha(x_{3}) = \alpha(x_{1}) + \alpha(x_{3}) = \overline{0}$. Furthermore, $h_{1} = \rho_{r+1}(h_{2}) + h_{3}$.
\end{lemma}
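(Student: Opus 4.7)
The plan is to mirror the two-part structure of Lemma \ref{resultado3}: first deduce the grading equalities from the auxiliary lemma on triples in $\mathbb{Z}_3^{*}$, then produce the decomposition by a single commutator expansion inside $h_1$.

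For the grading statement, I would apply Lemma \ref{resultado3-auxiliar} with the ordered triple $(a_1, a_2, a_3) = (\alpha(x_3), \alpha(x_2), \alpha(x_1))$. Reading the hypotheses of our lemma in reverse, the partial sums $a_3 + a_2 = \alpha(x_1)+\alpha(x_2)$ and $a_3 + a_2 + a_1 = \alpha(x_1 x_2 x_3)$ are both nonzero by assumption; this is precisely the second clause of Lemma \ref{resultado3-auxiliar}, and it delivers $a_1 + a_3 = \alpha(x_1) + \alpha(x_3) = \overline{0}$ and $a_1 + a_2 = \alpha(x_2) + \alpha(x_3) = \overline{0}$, which is exactly the grading identity claimed.

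For the polynomial decomposition, I would expand $x_2 x_3 = [x_2,x_3] + x_3 x_2$ inside $h_1$. This writes $h_1$ as the sum $x_1 \cdot [x_2,x_3] \cdot x_4 \cdots x_r + x_1 \cdot x_3 \cdot x_2 \cdot x_4 \cdots x_r$. The second summand is exactly $h_3$ by definition, and the first summand coincides with $\rho_{r+1}(h_2)$, since $\rho_{r+1}$ acts as the identity on every $x_i$ with $i \neq r+1$ and substitutes $[x_2,x_3]$ for the neutral auxiliary variable $x_{r+1}$ appearing in position two of $h_2 = x_1 \cdot x_{r+1} \cdot x_4 \cdots x_r$.

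I do not expect any serious obstacle; the argument is essentially a transcription of the proof of Lemma \ref{resultado3} with the distinguished pair of variables moved from the right end of the word to the left end. The only point requiring mild attention is ensuring that the symmetric half of Lemma \ref{resultado3-auxiliar} is the one invoked, because here the chain of nonzero partial sums grows from the left rather than from the right, and that the endomorphism $\rho_{r+1}$ plays the role analogous to $\psi_{r+3}$ in the earlier lemma.
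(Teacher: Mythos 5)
Your proposal is correct and follows exactly the route the paper intends: the paper gives no separate proof of Lemma \ref{resultado5}, stating only that it follows the proof of Lemma \ref{resultado3}, and your argument is precisely that transcription — Lemma \ref{resultado3-auxiliar} for the grading equalities (your reversed triple with the second clause gives the same conclusion as the first clause applied to $(\alpha(x_1),\alpha(x_2),\alpha(x_3))$ directly) and the expansion $x_2x_3=[x_2,x_3]+x_3x_2$ for the decomposition $h_1=\rho_{r+1}(h_2)+h_3$.
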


%%%%%%%%%%%%%%%%%%%%%%%%%%%%%%%%%%%%%%%%%%%%%%%%%%%%%%%%%%%%%%%%%%%%%%%%%%%%%%%%%%%%%%%%%%%%%%%%%%%PARTE4
%%%%%%%%%%%%%%%%%%%%%%%%%%%%%%%%%%%%%%%%%%%%%%%%%%%%%%%%%%%%%%%%%%%%%%%%%%%%%%%%%%%%%%%%%%%%%%%%%%%
%%%%%%%%%%%%%%%%%%%%%%%%%%%%%%%%%%%%%%%%%%%%%%%%%%%%%%%%%%%%%%%%%%%%%%%%%%%%%%%%%%%%%%%%%%%%%%%%%%%
%%%%%%%%%%%%%%%%%%%%%%%%%%%%%%%%%%%%%%%%%%%%%%%%%%%%%%%%%%%%%%%%%%%%%%%%%%%%%%%%%%%%%%%%%%%%%%%%%%%

%%%% 22 de julho
%%%% 22 de julho

\begin{definition}\label{base1}
Let $H_{1} = [h_{1},h_{2}]$ be an identity of type 1. We say that $H_{1}$ is reduced when $h_{1}, \mbox{and} h_{2}$ are monomials up to degree $3$.
\end{definition}

\begin{definition}\label{base2}
Let $H_{2} = h_{1}h_{2}h_{3} - h_{3}h_{2}h_{1}$ be an identity of type 2. We say that $H_{2}$ is reduced when $h_{1},h_{2}, \mbox{and} h_{3}$ are monomials up to degree $3$.
\end{definition}

\begin{definition}
Let $m_{1} = [h_{1},h_{2}]$ be an identity of type 1, and let $m_{2} = h_{1}h_{2}h_{3} - h_{3}h_{2}h_{1}$ be an identity of type 2. Let us define:
\begin{enumerate}
\item $degft_{1}(m_{1}) = deg h_{1},degft_{2}(m_{1}) = deg h_{2}$,
\item $degst_{1}(m_{2}) = deg h_{1}, degst_{2}(m_{2}) = deg h_{2},$ and $degst_{3}(m_{2}) = deg h_{3}$.
\end{enumerate}
\end{definition}

We know that $J_{1}$ is generated by the identities of type 1. In the next lemma, we prove that the weak $T_{G}$-ideal generated by the identities of
type 1 coincides with the weak $T_{G}$-ideal generated by the reduced identities of type 1.

\begin{theorem}
The weak $T_{G}$-ideal $J_{1}$ coincides with the weak $T_{G}$-ideal generated by the reduced polynomial identities of type 1.
\end{theorem}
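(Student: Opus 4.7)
The plan is to let $I$ denote the weak $T_G$-ideal generated by the reduced identities of type 1 and prove, by induction on the total degree $p + q$ of a generator $[h_1, h_2]$ of $J_1$ (so $p = \deg h_1$, $q = \deg h_2$, the monomial $h_1 h_2$ is multi-linear, and $\alpha(h_1) = \alpha(h_2) = \overline{0}$), that $[h_1, h_2] \in I$. Using $[h_1, h_2] = -[h_2, h_1]$ I may always arrange $p \geq q$, and the base case $p \leq 3$ is reduced by definition. For the inductive step I write $h_1 = x_{i_1}\cdots x_{i_p}$ and consider the partial sums $s_k = \alpha(x_{i_1}\cdots x_{i_k}) \in \mathbb{Z}_3$, with $s_p = \overline{0}$.

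First I would dispose of the case in which some letter $x_{i_r}$ of $h_1$ has $\alpha(x_{i_r}) = \overline{0}$. Corollary \ref{resultado2} rewrites $[h_1, h_2]$ as a sum whose $\mu_k$-summands involve $\widehat{Y}$-type identities with only $p - 1$ variables on the left (the letter $x_{i_r}$ being dropped, which preserves triviality of the graded degree on the left); those identities lie in $I$ by induction, and each $\mu_k$ is a legal weak $T_G$-endomorphism because $\alpha(x_{i_r}) = \overline{0}$ forces $[x_{i_k}, x_{i_r}]$ to have the same graded degree as $x_{i_k}$. The remaining summand $[x_{i_r} w, h_2]$, in which $x_{i_r}$ has been moved to the left end, is then reduced by the Leibniz rule $[x_{i_r} w, h_2] = x_{i_r}[w, h_2] + [x_{i_r}, h_2] w$ into two type-1 identities of strictly smaller total degree.

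Next, assuming every letter of $h_1$ has nonzero degree, I would split on whether some internal partial sum vanishes. If $s_k = \overline{0}$ for some $1 \leq k < p$, then $h_1 = AB$ with $\alpha(A) = \alpha(B) = \overline{0}$ and $1 \leq \deg A, \deg B < p$, and Leibniz again reduces $[h_1, h_2]$ to two type-1 identities of smaller total degree. Otherwise all of $s_1, \ldots, s_{p-1}$ are nonzero; then for $p \geq 4$ the three rightmost letters of $h_1$ satisfy the hypotheses of Lemma \ref{resultado3} at $r = p - 2$ (the individual letter-degrees are nonzero, $\alpha(x_{i_{p-1}} x_{i_p}) = -s_{p-2} \neq \overline{0}$, and $\alpha(x_{i_{p-2}} x_{i_{p-1}} x_{i_p}) = -s_{p-3} \neq \overline{0}$). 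The lemma then yields $h_1 = \psi_{p+1}(h_1^{\sharp}) + h_1^{\flat}$, where $h_1^{\sharp}$ has only $p - 1$ letters (and contains the new $\overline{0}$-degree variable $x_{p+1}$) and $h_1^{\flat}$ is $h_1$ with $x_{i_{p-2}}$ and $x_{i_{p-1}}$ interchanged. By induction, $[h_1^{\sharp}, h_2] \in I$, and hence so is its $\psi_{p+1}$-image.

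The one delicate step I anticipate is checking that $h_1^{\flat}$ falls into the previous sub-case, i.e., that some internal partial sum of the swapped monomial vanishes. The two conclusions $\alpha(x_{i_{p-2}}) + \alpha(x_{i_{p-1}}) = \alpha(x_{i_{p-2}}) + \alpha(x_{i_p}) = \overline{0}$ supplied by Lemma \ref{resultado3} rewrite in terms of the $s_k$ as $s_{p-1} = s_{p-2} - s_{p-3}$, from which a short $\mathbb{Z}_3$-computation shows that the partial sum of $h_1^{\flat}$ at position $p - 2$ equals $s_{p-3} + \alpha(x_{i_{p-1}}) = s_{p-3} + s_{p-1} - s_{p-2} = \overline{0}$. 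This numerical coincidence is precisely what lets the splitting argument handle $[h_1^{\flat}, h_2]$, closing the induction. Lemma \ref{resultado5} plays the symmetric role at the left-hand end of $h_1$, which can be used in place of Lemma \ref{resultado3} whenever the right-hand rewriting is awkward.
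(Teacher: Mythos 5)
Your argument is correct and is essentially the paper's own proof: the three-way case split (a letter of $G$-degree $\overline{0}$ in $h_{1}$ handled via Corollary \ref{resultado2} plus the Leibniz rule, a vanishing internal partial sum handled by factoring $h_{1}$ and applying Leibniz, and the remaining case handled by the commutator-plus-swap rewriting of Lemmas \ref{resultado3}/\ref{resultado5}, whose swapped monomial then factors by the same $\mathbb{Z}_{3}$ computation) is exactly the paper's. The only differences are organizational: you run a single induction on total degree using $[h_{1},h_{2}]=-[h_{2},h_{1}]$ where the paper runs a lexicographic double induction on $(\deg h_{1},\deg h_{2})$, and you apply the rewriting lemma at the right end of $h_{1}$ (Lemma \ref{resultado3}) rather than the left (Lemma \ref{resultado5}); your explicit verification of the lemma's hypotheses and of the vanishing partial sum of $h_{1}^{\flat}$ is a welcome amplification of details the paper leaves implicit.
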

\begin{proof}

Let $H_{1} = [h_{1},h_{2}]$ be an identity of type 1. We can prove it by double induction on $(degft_{1}(H_{1}),degft_{2}(H_{1}))$. The result is clear if $(degft_{1}(H_{1}),degft_{2}(H_{1})) = (3,3)$. Suppose that the result holds if $(degft_{1}(H_{1}),degft_{2}(H_{1})) = (3,3),\ldots,(m,3)$.

To analyze the case $degft_{1}(H_{1}) = m+1$, let us first consider that $h_{1} = h_{3}h_{4}$, where $h_{3}$ and $h_{4}$ are monomials such that $\alpha(h_{3}) = \alpha(h_{4}) = \overline{0}$. So $[h_{3}h_{4},h_{2}] = h_{3}[h_{4},h_{2}] + [h_{3},h_{2}]h_{4}$. This part of the result follows from Corollary \ref{resultado1}.

%%%%%
%%%%% 23 de julho
%%%%%

If $h_{1}$ cannot be rewritten in this way, there are two cases to analyze.
\begin{enumerate}
\item Assume that $h_{1}$ has a variable of $G$-degree $\overline{0}$. According to Corollary \ref{resultado2}, we can rewrite $[h_{1},h_{2}]$ as $[h_{3},h_{2}] + p$. Here $p$ is a sum of consequences of identities of type 1. These identities are of type $[\widetilde{h_{i}},h_{2}]$, with $deg (\widetilde{h_{i}}) = m - 1$. In $[h_{2},h_{3}]$, we have $deg(h_{1}) = deg(h_{3})$, $h_{3} = xh_{4}$, where $xh_{4}h_{2}$ is a multi-linear monomial and $\alpha(x) = \alpha(h_{4}) = \overline{0}$. So $[xh_{4},h_{2}] = x[h_{4},h_{2}] + [x,h_{2}]h_{4}$.

\item Suppose that $h_{1}$ does not have a variable of $G$-degree $\overline{0}$. By Lemma \ref{resultado5}, there exist multi-linear monomials $h_{3}$ and $h_{4}$, where $deg(h_{3}) = m$ and $deg(h_{4}) = m-1$. The monomial $h_{1}$ is a sum of $h_{3}$ and a consequence of $h_{4}$. Denote this consequence of $h_{4}$ by $h_{5}$. The monomial $h_{2}h_{3}$ is multi-linear. Without loss of generality, we can assume that $h_{2}h_{4}$ is multi-linear too. Note that $\alpha(h_{3}) = \alpha(h_{4}) = \overline{0}$. By Lemma \ref{resultado5} again, there exist two multi-linear monomials $h_{6}$ and $h_{7}$ such that $h_{3} = h_{6}h_{7}$, where $deg(h_{6}) = 2$ and $\alpha(h_{6})=\alpha(h_{7}) = \overline{0}$.
Thus $[h_{1},h_{2}] = [h_{6}h_{7} + h_{5},h_{2}] = h_{6}[h_{7},h_{2}] + [h_{6},h_{2}]h_{7} + [h_{5},h_{2}]$. The identity $[h_{5},h_{2}]$ is consequence of $[h_{4},h_{2}]$.
\end{enumerate}
Assume that the result is true for $(deg(h_{1}),deg(h_{2})) = (m,3),\ldots,(m,r)$. By following the last arguments word by word, we can prove the result holds for
\begin{center}
$(deg(h_{1}),deg(h_{2})) = (m,r + 1)$.
\end{center}
The proof is complete.
\end{proof}

\begin{theorem}
The weak $T_{G}$-ideal $J_{2}$ coincides with the weak $T_{G}$-ideal generated by the reduced polynomial identities of type 2.

\end{theorem}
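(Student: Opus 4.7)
The plan is to mirror the proof of the previous theorem, doing a triple induction on $(degst_1(H_2), degst_2(H_2), degst_3(H_2))$ lexicographically, with base case $(3,3,3)$ in which $H_2 = h_1 h_2 h_3 - h_3 h_2 h_1$ is already reduced. Throughout I work modulo $J_1$, which by the preceding theorem is already generated by reduced type 1 identities.

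The first layer of induction reduces $degst_1$. Fix $H_2$ with $degst_1 = m+1 \geq 4$ and $degst_2, degst_3 \leq 3$, and split into cases according to the grading of $h_1$. If $h_1 = uv$ with $\alpha(u)=\overline{0}$, then since $\alpha(h_3 h_2)=\overline{0}$ the commutator $[h_3 h_2, u]$ lies in $J_1$, giving
\[
H_2 \equiv u\bigl(v h_2 h_3 - h_3 h_2 v\bigr) \pmod{J_1},
\]
and the inner factor is a type 2 identity of smaller first-slot degree. Symmetrically, if $h_1 = uv$ with $\alpha(v)=\overline{0}$, using $[v, h_2 h_3]\in J_1$ yields
\[
H_2 \equiv \bigl(u h_2 h_3 - h_3 h_2 u\bigr)v \pmod{J_1}.
\]
If $h_1$ admits no neutral prefix or suffix but carries an internal neutral variable, I recognize $H_2$ after relabeling as a $V$-polynomial $V_{r,h_1',h_2,h_3}$ and apply Corollary \ref{resultado2.1}: each $\mu_i$-summand is the graded-endomorphism image of a type 2 identity with one fewer variable in the first slot, and $V_1$ falls into the neutral-prefix case above.

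If on the other hand every variable of $h_1$ has nonzero $G$-degree, I invoke Lemma \ref{resultado5}, writing $h_1 = \rho_{r+1}(h_2') + h_3'$. The $\rho_{r+1}$-summand, since $\rho_{r+1}$ fixes $h_2, h_3$ and is a graded endomorphism, yields $\rho_{r+1}\bigl(h_2' h_2 h_3 - h_3 h_2 h_2'\bigr)$, where $h_2'$ has a neutral internal variable $x_{r+1}$ and is therefore disposed of by the $V$-polynomial case. For the residual summand $h_3' h_2 h_3 - h_3 h_2 h_3'$, Lemma \ref{resultado3-auxiliar} forces $\alpha(x_1)+\alpha(x_3)=\overline{0}$, so $h_3' = x_1 x_3 x_2 x_4\cdots x_r$ begins with a neutral prefix $x_1 x_3$, and the neutral-prefix case applies.

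Reducing $degst_3$ proceeds by the mirror argument, exchanging the roles of $h_1$ and $h_3$ and substituting Corollary \ref{resultado2.2} (the $W$-polynomial) for Corollary \ref{resultado2.1}, and Lemma \ref{resultado3} for Lemma \ref{resultado5}. Reducing $degst_2$ has a different flavor: I strip off a neutral prefix or suffix of $h_2$ directly via Lemmas \ref{resultado4-parte2} and \ref{resultado4}, handle an internal neutral variable of $h_2$ by iterating these to isolate the neutral piece, and finally treat the all-nonzero case by applying Lemma \ref{resultado3} or \ref{resultado5} to $h_2$.

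The main obstacle is the all-nonzero-degree case: without Lemma \ref{resultado3-auxiliar}, which is the $\mathbb{Z}_3$-specific combinatorial fact underlying the whole scheme, there is no reason the swap produced by Lemma \ref{resultado5} would create a neutral sub-word that allows induction to close, and one would risk an infinite descent among same-degree identities. Carefully verifying that each residual summand genuinely lands in a strictly earlier case of the induction is the most delicate part of the argument.
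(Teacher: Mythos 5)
Your overall strategy is the paper's: a lexicographic triple induction on $(degst_{1},degst_{2},degst_{3})$, stripping neutral prefixes/suffixes via Lemmas \ref{resultado4} and \ref{resultado4-parte2}, telescoping an internal neutral variable to the boundary of a slot via the $V$/$W$ machinery, and resolving the all-nonzero case with the $\mathbb{Z}_{3}$-specific swap of Lemmas \ref{resultado3} and \ref{resultado5} (underpinned by Lemma \ref{resultado3-auxiliar}). Your first-slot reduction is correct and matches the paper's in every detail, and you correctly identify the all-nonzero case as the crux.

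However, you have misassigned the tools for the second and third slots, and as written one step fails. The polynomial $W_{r,h_{1},h_{2},h_{3}} = h_{2}(h_{1}x_{r}\cdots x_{1})h_{3} - h_{3}(h_{1}x_{r}\cdots x_{1})h_{2}$ varies the \emph{middle} factor of the type 2 identity: no relabelling makes Corollary \ref{resultado2.2} act on the third slot, so your plan to use it there does not apply. Conversely, your treatment of $degst_{2}$ --- ``handle an internal neutral variable of $h_{2}$ by iterating'' Lemmas \ref{resultado4} and \ref{resultado4-parte2} --- is the step that fails: those lemmas only fire once the neutral piece is already a prefix or suffix of a slot, and if $h_{2}=uxv$ with $\alpha(x)=\overline{0}$ and $u,v$ non-neutral they give no reduction at all. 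Isolating that internal $x$ costs commutators $[u',x]$ with $\alpha(u')\neq\overline{0}$, which are not in $J_{1}$; this is exactly what Corollary \ref{resultado2.2} is built for, writing $H_{2}=(h_{1}h_{4}xh_{3}-h_{3}h_{4}xh_{1})+p$ with $p$ a sum of graded-endomorphism images of type 2 identities of smaller middle-slot degree. So the correct assignment is: $V$ and Lemma \ref{resultado5} for slot 1, $W$ and Lemma \ref{resultado3} for slot 2, and for slot 3 the mirror of the slot-1 argument (the identity is antisymmetric under $h_{1}\leftrightarrow h_{3}$), which requires the mirror of the $V$-family pushing the neutral variable to the \emph{end} of the last factor --- not $W$. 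With the tools reassigned this way your proof closes and coincides with the paper's.
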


\begin{proof}

Let $H_{2} = h_{1}h_{2}h_{3} - h_{3}h_{2}h_{1}$ be an identity of type 2.

We prove by triple induction on $(degst_{1}(H_{2}),degst_{2}(H_{2}) , degst_{3}(H_{2}))$. The result is immediate if $(degst_{1}(H_{2}),degst_{2}(H_{2}) , degst_{3}(H_{2})) = (3,3,3)$. Suppose that the result is true for $(degst_{1}(H_{2}),degst_{2}(H_{2}) , degst_{3}(H_{2})) = (3,3,3),\ldots,(m,3,3)$.

We assume first that there exist two monomials $h_{4}$ and $h_{5}$ such that $h_{1} = h_{4}h_{5}$, where $\alpha(h_{4}) = \overline{0}$. By Lemma \ref{resultado4}, $H_{2}$ follows from the identity of type 2 $(h_{5}h_{2}h_{3} - h_{3}h_{2}h_{5})$ and elements of $J_{1}$. Hereafter, we assume that $h_{1}$ cannot be decomposed in this way.

Now, suppose that $h_{1}$ has a variable of $G$-degree $\overline{0}$. Due to Corollary \ref{resultado2.1}, we have $h_{1}h_{2}h_{3} - h_{3}h_{2}h_{1}$ is $xh_{0}h_{2}h_{3} - h_{3}h_{2}xh_{0} + p$, where $x \in X$, $\alpha(x) = \overline{0}$. Furthermore $xh_{0}h_{2}h_{3}$ is a multi-linear monomial. The polynomial $p$ is a sum of consequences of identities of type 2 of type $\widetilde{h_{4}}h_{2}h_{3} - h_{3}h_{2}\widetilde{h_{4}}$. Here $\widetilde{h_{4}}$ is a monomial and $deg(\widetilde{h_{4}}) = m -1$. The identity $\widetilde{h_{4}}h_{2}h_{3} - h_{3}h_{2}\widetilde{h_{4}}$ is type 2. According to the Lemma \ref{resultado4}, $xh_{0}h_{2}h_{3} - h_{3}h_{2}xh_{0}$ is consequence of $h_{0}h_{2}h_{3} - h_{3}h_{2}h_{0}$ and elements of $J_{1}$. From now on, we assume that $h_{1}$ does not have a variable of $G$-degree $\overline{0}$.

By Lemma \ref{resultado5}, there exist multi-linear monomials $h_{6},h_{7}$, with $deg(h_{6}) = m$ and $deg(h_{7}) = m-1$, such that $h_{1}$ is a sum of $h_{6}$ and a consequence of $h_{7}$. Moreover, $h_{6} = h_{8}h_{9},$ where $h_{8}$ and $h_{9}$ are monomials and $\alpha({h_{8}}) = \overline{0}$. Notice that $\alpha(h_{1}) = \alpha(h_{6}) = \alpha(h_{9})$. The monomial $h_{8}h_{9}h_{2}h_{3}$ is multi-linear. Moreover, $h_{8}h_{9}h_{2}h_{3} - h_{2}h_{3}h_{8}h_{9}$ is consequence of identity of type 2, $h_{9}h_{2}h_{3} - h_{2}h_{3}h_{9}$, and elements of $J_{1}$. Let $h_{10}$ be the consequence of $h_{7}$ reported before. Without loss of generality, we can assume that $h_{7}h_{2}h_{3}$ is multi-linear.
The polynomial identity $h_{10}h_{2}h_{3} - h_{3}h_{2}h_{10}$ is a consequence of the polynomial identity of type 2 $h_{7}h_{2}h_{3} - h_{3}h_{2}h_{7}$. Considering $h_{1}h_{2}h_{3} - h_{3}h_{2}h_{1} = (h_{8}h_{9}h_{2}h_{3} - h_{2}h_{3}h_{8}h_{9}) + (h_{10}h_{2}h_{3} - h_{3}h_{2}h_{10})$, the result of this part of theorem follows.

 Assume that the result is valid for 
 
 \begin{center}
 $(degst_{1} (h_{1}h_{2}h_{3}), degst_{2} ( h_{1}h_{2}h_{3}), degst_{3} (h_{1}h_{2}h_{3})) = (m,3,3), \ldots, (m,k,3)$.
  \end{center}
 
 Suppose that there exists monomials $h_{4}$ and $h_{5}$ such that $h_{2} = h_{4}h_{5}$, where $\alpha(h_{5}) = \overline{0}$. By Lemma \ref{resultado4-parte2}, $H_{2}$
follows from $h_{1}h_{4}h_{3} - h_{3}h_{4}h_{1}$ and elements of $J_{1}$. Hereafter, we assume that $h_{2}$ cannot be decomposed in this way.

 Now, we assume that there exists a variable of $G$-degree $\overline{0}$ in $h_{2}$. According to Lemma \ref{resultado2.2}, $h_{1}h_{2}h_{3} - h_{3}h_{2}h_{1}$ is $h_{1}h_{4}xh_{3} - h_{3}h_{4}xh_{1} + p$, where $x$ is a variable with $\alpha(x) = \overline{0}$, $h_{3}h_{4}xh_{1}$ is a multi-linear monomial, and $\alpha(h_{4}) = \alpha(h_{2})$. The polynomial $p$ is a sum of consequences of identities of type 2. These identities are of the type $h_{1}\widetilde{h_{5}}h_{3} - h_{3}\widetilde{h_{5}}h_{1}$, where $h_{1}\widetilde{h_{5}}h_{3}$ is a multi-linear monomial with $deg (\widetilde{h_{5}}) = k - 1$ and $\alpha(h_{2}) = \alpha(\widetilde{h_{5}})$. At light of Lemma \ref{resultado4-parte2}, $h_{1}h_{4}xh_{3} - h_{3}h_{4}xh_{1}$ is a consequence of $(h_{1}h_{4}h_{3} - h_{3}h_{4}h_{1})$ and elements of $J_{1}$. Henceforth, we suppose that $h_{2}$ does not have a variable of $G$-degree $\overline{0}$.

 According to Lemma \ref{resultado3}, $h_{2}$ is equal to a sum of a monomial $h_{4}$ and a consequence of a monomial $h_{5}$. Let $h_{6}$ be this consequence of $h_{5}$. We have $deg(h_{5}) = k - 1, deg(h_{4}) = k$, $\alpha(h_{2}) = \alpha(h_{4}) = \alpha(h_{5})$. The monomial $h_{1}h_{4}h_{3}$ is multi-linear. Without loss of generality, we can assume that $h_{1}h_{5}h_{3}$ is multi-linear too.
By Lemma \ref{resultado3} again, we have $h_{4} = h_{7}h_{8}$, where $\alpha(h_{8}) = \overline{0}$. Thus $h_{1}h_{2}h_{3} - h_{3}h_{2}h_{1} = (h_{1}h_{7}h_{8}h_{3} - h_{3}h_{7}h_{8}h_{1}) + (h_{1}h_{6}h_{3} - h_{3}h_{6}h_{1})$.
According to the Lemma \ref{resultado4-parte2}, $(h_{1}h_{7}h_{8}h_{3} - h_{3}h_{7}h_{8}h_{1})$ is a consequence of $h_{1}h_{7}h_{3} - h_{3}h_{7}h_{1}$ and elements of $J_{1}$. The summand $(h_{1}h_{6}h_{3} - h_{3}h_{6}h_{1})$ is a consequence of $(h_{1}h_{5}h_{3} - h_{3}h_{5}h_{1})$.

Lastly, assume that the result holds for
\begin{center}
$(degst_{1} (H_{2}), degst_{2} (H_{2}), degst_{3} (H_{2}) = (m,k,3),\ldots, (m,k,r)$.
\end{center}
By mimicking of the arguments of the case $(degst_{1}(H_{2}), degst_{2} (H_{2}), degst_{3} (H_{2}) = (m+1,3,3)$ and Corollary \ref{resultado2.2}, we can prove the result is true for
\begin{center}
$(degst_{1}(H_{2}), degst_{2}(H_{2}), degst_{3}(H_{2})) = (m,k,r + 1)$.
\end{center}

The proof is complete. The conclusion follows by induction.
\end{proof}

\begin{corollary}\label{conclusao}
The weak $T_{G}$-ideal $J$ is generated by the polynomials reported in Definitions \ref{base1} and \ref{base2}.
\end{corollary}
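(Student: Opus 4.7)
The plan is to observe that Corollary \ref{conclusao} is an immediate assembly of the two preceding theorems together with the additive decomposition of $J$ already spelled out in the paper. Concretely, right after the definition of $J_1$ and $J_2$ we noted that $J = J_1 + J_2$ and, more precisely, that if $J_1 = \langle S_1 \rangle_w$ and $J_2 = \langle S_2 \rangle_w$ then $J = \langle S_1 \cup S_2 \rangle_w$. So the strategy is simply to instantiate $S_1$ as the set of reduced identities of type 1 (Definition \ref{base1}) and $S_2$ as the set of reduced identities of type 2 (Definition \ref{base2}).

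First I would invoke the previous theorem which shows $J_1$ equals the weak $T_G$-ideal generated by the reduced identities of type 1, so one may take $S_1$ to be exactly the set of $H_1 = [h_1,h_2]$ with $h_1,h_2$ monomials of degree at most $3$ satisfying $\alpha(h_1) = \alpha(h_2) = 1_G$ and $h_1 h_2$ multi-linear. Next I would invoke the subsequent theorem to take $S_2$ to be the analogous collection of reduced type 2 identities $H_2 = h_1 h_2 h_3 - h_3 h_2 h_1$ with each $h_i$ of degree at most $3$. Both steps are already proved in the excerpt, so there is nothing new to verify at this stage.

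Finally I would conclude by citing the identity $J = J_1 + J_2$ together with the general fact $\langle S_1 \rangle_w + \langle S_2 \rangle_w = \langle S_1 \cup S_2 \rangle_w$, which gives $J = \langle S_1 \cup S_2 \rangle_w$, precisely the statement of the corollary. There is no real obstacle here: the two preceding theorems carry the entire technical burden (the double/triple inductions on the degree vectors $(\deg h_1, \deg h_2)$ and $(\deg h_1, \deg h_2, \deg h_3)$, and the reductions via Lemmas \ref{resultado4}, \ref{resultado4-parte2}, \ref{resultado3}, \ref{resultado5}, and Corollaries \ref{resultado2}, \ref{resultado2.1}, \ref{resultado2.2}). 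The corollary is thus a one-line bookkeeping conclusion, and the write-up should be kept correspondingly short.
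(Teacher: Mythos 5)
Your proposal is correct and matches the paper's (implicit) argument exactly: the corollary follows immediately from the two preceding theorems combined with the remark, stated right after the definition of $J_{1}$ and $J_{2}$, that $J = J_{1} + J_{2}$ and that generating sets for $J_{1}$ and $J_{2}$ together generate $J$. Nothing further is needed.
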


\end{document}